\newtheorem{theorem}{Theorem}[section]
\newtheorem{corollary}[theorem]{Corollary}
\newtheorem{proposition}[theorem]{Proposition}
\newtheorem{lemma}[theorem]{Lemma}
\theoremstyle{definition}
\newtheorem{definition}[theorem]{Definition}
\newtheorem{example}[theorem]{Example}
\newtheorem{remark}[theorem]{Remark}
\newcommand{\reg}{\textnormal{reg}}
\newcommand{\lex}{\textnormal{lex}}
\newcommand{\sat}{\textnormal{sat}}
\newcommand{\Proj}{\textnormal{Proj}\,}
\begin{document}

\title[Regularities for ideals with a given Hilbert function]
{The range of all regularities for polynomial ideals with a given Hilbert function}

\author[]{Francesca Cioffi}
\address{Dip. di Matematica e Applicazioni \lq\lq R. Caccioppoli", Universit\`{a} degli Studi di Napoli Federico II \\  Via Cintia \\ 80126  Napoli \\ Italy}
\email{\href{mailto:cioffifr@unina.it}{cioffifr@unina.it}}
\thanks{The author is a member of \lq\lq National Group for Algebraic and Geometric Structures, and their Applications\rq\rq \ (GNSAGA - INDAM (Italy)).}


\begin{abstract}
Given the Hilbert function $u$ of a closed subscheme of a projective space over an infinite field $K$, let $m_u$ and $M_u$ be, respectively, the minimum and the maximum among all the Castelnuovo-Mumford regularities of schemes with Hilbert function~$u$. I show that, for every integer $m$ such that $m_u \leq m \leq M_u$, there exists a scheme with Hilbert function $u$ and Castelnuovo-Mumford regularity $m$. As a consequence, the analogous algebraic result for an O-sequence $f$ and homogeneous polynomial ideals over $K$ with Hilbert function $f$ holds too. 

Although this result does not need any explicit computation, I also describe how to compute a scheme with the above requested properties. Precisely, I give a method to construct a strongly stable ideal defining such a scheme. 
\end{abstract}

\keywords{Castelnuovo-Mumford regularity, Hilbert function, minimal function, strongly stable ideal, closed projective subscheme}
\subjclass[2010]{Primary: 13P99, 14Q99. Secondary: 68W30, 11Y55}
\maketitle


\section*{Introduction}

Investigations on Castelnuovo-Mumford regularity are still very active, due to the interesting consequences that new results in this topic have in order to better understand the behavior of projective schemes and, more generally, of graded algebras. Recently, a new contribution in this context has been given in  \cite{HoaArXiv} (see also the references therein) about bounds that are described by means of Hilbert coefficients.
  
In the present manuscript, I face the following question that has been posed to me by Le Tuan Hoa in a private communication: \lq\lq For a given Hilbert function $f$, the maximal Castelnuovo-Mumford regularity $M$ is attained by the lex-segment ideal $\lex(f)$ with Hilbert function $f$. On the other hand, there is a minimal value $m$ among all ideals with Hilbert function $f$. Given any $m< a < M$, can we find an ideal $I$ such that $\reg(I) = a$?\rq\rq

Here, Hilbert functions of finitely generated standard graded algebras over an infinite field $K$ are considered, hence Hilbert functions of standard graded algebras of type $K[x_0,\dots,x_n]/I$, where $I$ is a homogeneous polynomial ideal and the integer $n$ depends on the Hilbert function. In this setting, starting from a geometrical point of view in which a scheme will be understood to be a closed subscheme of a projective space, the first answer that is given is the following (see Theorem~\ref{th:main result}):
\begin{itemize}
\item[(A)] Let $m_u$ and $M_u$ be, respectively, the minimal and the maximal possible Castelnuovo-Mumford regularity for a scheme with Hilbert function $u$. For each integer $m$ such that $m_u\leq m \leq M_u$ there is a scheme $X$ with Hilbert function $u$ and $\reg(X)=m$.
\end{itemize}

Nevertheless, an analogous result immediately follows for the regularity of polynomial ideals with a given Hilbert function $f$, because there exists a polynomial ideal $J$ with Hilbert function $f$ and regularity $a$ if and only if there exists a scheme $X$ with Hilbert function $\sum f$ and Castelnuovo-Mumford regularity $a$ (see Corollary~\ref{cor:main cor}): 
\begin{itemize}
\item[(B)] Let $m$ and $M$ be, respectively, the minimal and the maximal possible regularity for a polynomial ideal $I$ such that $f$ is the Hilbert function of the finitely generated graded $K$-algebra $K[x_1,\dots,x_n]/I$. For every integer $a$ such that $m\leq a \leq M$ there is a {\em strongly stable ideal} $J$ so that $\reg(J)=a$ and $f$ is the Hilbert function of $K[x_1,\dots,x_n]/J$.
\end{itemize}

In \cite{CLMR2015} some special techniques have been developed in order to compute the minimal possible Castelnuovo-Mumford regularity $m_{p(z)}$ for a scheme with a given Hilbert polynomial $p(z)$. Among other results, also a closed formula has been given for the minimal possible Castelnuovo-Mumford regularity $m_u$ of a scheme $X$ with a given Hilbert function $u$ (\cite[Theorem~8.4]{CLMR2015} and Theorem \ref{th:minreg}). On the other hand, the maximal possible Castelnuovo-Mumford regularity $M_u$ of such a scheme $X$ coincides with the regularity of the lex-segment ideal with Hilbert function $\Delta u$ (Proposition \ref{prop: max reg}).

Although the results of \cite{CLMR2015} have been stated on a field of characteristic zero, here we can apply them over an infinite field $K$, using the notion of zero-generic initial ideal that has been introduced and studied in \cite{CS2015}. 

I exploit and improve some of the techniques that have been introduced in \cite{CLMR2015}, more precisely the notion of {\em minimal function} (see Definition~\ref{def:minimal functions}), and the construction that has been called an {\em expanded lifting} \cite[Theorem~7.4]{CLMR2015} and which here is re-stated over an infinite field (see Theorem \ref{th:construction}). 
The new insights that are presented (see Theorem~\ref{th:new}) together with some technical results give a proof for statements (A) and (B). In particular, it will be useful the fact that the Hilbert function of the unique saturated lex-segment ideal with a given admissible Hilbert polynomial $p(z)$ attains $p(z)$ exactly from the Gotzmann number of $p(z)$ minus $1$ (see Proposition~\ref{prop:CMS e altro}). 

Although the proof of statements (A) and (B) does not need any explicit computation, in Section \ref{sec:explicit construction} I show how to directly compute a strongly stable ideal with a given Hilbert function and regularity. To this aim, I employ the notion of growth-height-lexicographic Borel set, that has been introduced and used by D.~Mall in \cite{Mall2} to prove the connectedness of the stratum of schemes with the same Hilbert function in a Hilbert scheme over a field of characteristic zero. In \cite{Pardue97} K.~Pardue proves this connectedness over any infinite field.



\section{Preliminaries}

Let $K$ be an infinite field and $S:=K[x_0,\dots,x_n]=\bigoplus_{t\geq 0} S_t$ be the ring of polynomials over $K$ in $n+1$ variables with $x_0\prec x_1\prec \dots \prec x_n$, where $S_t$ is the $K$-vector space of the homogeneous polynomials of degree $t$. 
For a subset $N\subseteq S$ we set $N_t:=N\cap S_t$. 
A {\em term} of $S$ is a power product $x^{\alpha}:=x_0^{\alpha_0} x_1^{\alpha_1} \dots x_n^{\alpha_n}$ where ${\alpha_0},\dots,\alpha_n$ are non-negative integers, $\min(x^\alpha)$ denotes the least variable appearing in $x^\alpha$ with a positive exponent and $\mathbb T$ is the multiplicative monoid of all terms of $S$. 


Given a degree $t$, a set $B\subset \mathbb T_t$ is a {\em lex-segment} if it consists of the $\vert B\vert$ highest terms of $\mathbb T_t$ with respect to the deglex order. 
Given a subset $N\subset\mathbb T_t$, a {\em lex-segment in $N$} is the intersection of a lex-segment of $\mathbb T_t$ and $N$.  
A monomial ideal $J$ is a {\em lex-segment ideal} if $J_t$ is a lex-segment for every integer $t$. 
 
For a homogeneous ideal $I\subset S$, the Hilbert function of the graded algebra $S/I$ is denoted by $H_{S/I}$ and its Hilbert polynomial by $p_{S/I}(z)$.

Let $\Delta^0 H_{S/I}(t):=H_{S/I}(t)$ and, for every $i\geq 1$, define the function $\Delta^i H_{S/I}: \mathbb{N} \longrightarrow\mathbb{N}$ such that $\Delta^i H_{S/I}(0):=1$ and $\Delta^i H_{S/I}(t):= \Delta^{i-1}H_{S/I}(t)-\Delta^{i-1}H_{S/I}(t-1)$, for every $t>0$, calling it {\em i-th derivative} of $H$; we use an analogous notation for Hilbert polynomials. 

Also define the function $\Sigma H_{S/I}: \mathbb{N} \longrightarrow\mathbb{N}$ letting $\Sigma H_{S/I}(0):=1$ and $\Sigma H_{S/I}(t):= \Sigma H_{S/I}(t-1)+H_{S/I}(t)$ for each $t\geq 1$, and call it {\em integral} of $H_{S/I}(t)$; we use an analogous notation for Hilbert polynomials. 

Given a positive integer $t$, any positive integer $a$ can be uniquely written in the form 
\begin{equation}
\label{espansione}
\begin{array}{lllll}
a & = & \binom{k(t)}{t} + \binom{k(t-1)}{t-1} + \dots + \binom{k(j)}{j} + \dots + \binom{k(1)}{1} &=: &a_t,
\end{array}
\end{equation} 
where $k(t)> k(t-1)>\dots > k(1)\geq 0$ and with the convention that a binomial coefficient $\binom{n}{m}$ is null whenever either $n<m$ or $m<0$, and $\binom{n}{0}=1$, for all $n\geq 0$ (see \cite[Lemma~4.2.6]{BH}). If $j$ is the minimum integer $i$ such that $k(i)\geq i$, we simply write $a_t=\binom{k(t)}{t} + \binom{k(t-1)}{t-1} + \dots + \binom{k(j)}{j}$. Note that $j\geq 1$. The unique writing in formula \eqref{espansione} is called {\em binomial expansion of $a$ in base $t$} or {\em $t$-th Macaulay representation of $a$}. 
Referring to \cite{Ro}, we let 
$$\begin{array}{ll}
(a_t)^+_+&:=\binom{k(t)+1}{t+1} + \binom{k(t-1)+1}{t} + \dots + \binom{k(j)+1}{j+1},  \text{ and }\\
(a_t)^-_-&:=\binom{k(t)-1}{t-1} + \binom{k(t-1)-1}{t-2} + \dots + \binom{k(j)-1}{j-1}.
\end{array}$$
A numerical function $H:\mathbb N \rightarrow \mathbb N$ is an {\em O-sequence} if $H(0)=1$ and $H(t+1)\leq (H(t)_t)^+_+$. 

Thanks to \cite{Ma}, a numerical function $H$ is an O-sequence if, and only if, it is the Hilbert function of $S/I$ for a suitable homogeneous ideal $I$ if, and only if, there exists the (unique) lex-segment ideal $\lex(H)\subset S$ such that $H$ is the Hilbert function of $S/\lex(H)$. For any O-sequence $H$, we also denote by $p_H(z)$ its Hilbert polynomial. 

Given a term order $\sigma$ and a homogeneous ideal $I\subset S$, denote by $\mathrm{in}_\sigma(I)$ the initial ideal of $I$ with respect to $\sigma$ (w.r.t.~$\sigma$, for short). Then, $S/I$ and $S/\mathrm{in}_\sigma (I)$ have the same Hilbert function. Thanks to A.~Galligo in charactestic zero and to D.~Bayer and M.~Stillman in any characteristic, in generic coordinates the initial ideal of $I$ w.r.t.~$\sigma$ is a constant Borel-fixed ideal, which is called the {\em generic initial ideal} of $I$ and denoted by $\mathrm{gin}_\sigma(I)$. It preserves many of the properties of $I$, especially when the term order $\sigma$ is the degrevlex.

A monomial ideal $J$ is {\em strongly stable} if for every term $x_1^{\alpha_1}\dots x_n^{\alpha_n}$ in $J$ and variable $x_i$ with $\alpha_i>0$, the term $x_1^{\alpha_1}\dots x_n^{\alpha_n}\cdot \frac{x_j}{x_i}$ belongs to $J$, for every $x_j\succ x_i$.
A strongly stable ideal is Borel-fixed, and the converse holds in characteristic zero. A lex-segment ideal is strongly stable.

The {\em zero-generic initial ideal} of a homogeneous ideal $I\subset S$ w.r.t.~a term order $\sigma$ is the ideal $(\mathrm{gin}_\sigma((\mathrm{gin}_\sigma(I))_{\mathbb Q}))_{K}\subset K[x_1,\dots,x_n]$ (see \cite[Definition~2.1]{CS2015}), simpler denoted by $\mathrm{gin}_\sigma(\mathrm{gin}_\sigma(I)_{\mathbb Q})_{K}$. 
It is a strongly stable ideal, independently from the characteristic of the field \cite[Proposition~2.2(i)]{CS2015}.


\section{Regularities and some interactions}

Denoting by $S$ the ring of polynomials over $K$ in $n+1$ variables, $\mathbb P^n_K:=\Proj(S)$ is the $n$-dimensional projective space over $K$.

\begin{definition}\label{def:regfunction}
Given an O-sequence $u$ with Hilbert polynomial $p(z)$, the \emph{regularity of $u$} is 
$\varrho_u:=\min\{t\in \mathbb N \ \vert \ u(t')=p(t'), \forall \ t'\geq t\}$.
\end{definition}

\begin{definition}\label{def:regularity}
A homogeneous ideal $I\subset S$ is {\em $m$-regular} if the $i$-th syzygy module of $I$ is generated in degree $\leq m+i$. The {\em regularity} $\reg(I)$ of $I$ is the smallest integer $m$ for which $I$ is $m$-regular. 
\end{definition}

For a survey on the description of the regularity of a finitely generated graded $S$-module in terms of local cohomology see \cite{HoaLecture,RTV2003}. 

Recall that a homogeneous ideal $I\subset S$ is said {\em saturated} if it coincides with its {\em saturation} $I^{\sat}:=\{f \in S \ \vert \ \forall \ i\in {0,\dots,n}, \exists\ k_i : x_i^{k_i}f \in I\}$. 

\begin{definition}\label{def:CMregularity}
With the common notation of the ideal sheaf cohomology, given a scheme $X\subset \mathbb{P}^n_K$ and its (saturated) defining ideal $I=I(X)$, the {\em Castelnuovo-Mumford regularity} of $X$ is $\reg(X):=\min\{t\in \mathbb N\ \vert \ H^i(\mathcal I_X(t'-i))=0, \forall \ t'\geq t, \forall i > 0 \}$ and it is equal to $\reg(I)$. We set $H_X(t):=H_{S/I}(t)$, $p_X(z):=p_{{S}/{I}}(z)$, $\varrho_X:=\varrho_{S/I}$. 
\end{definition}

\begin{remark}\label{rem:zero-generic}
If $J\subset S$ is a strongly stable ideal, then $\reg(J)$ is the maximal degree of its minimal generators \cite[Proposition 2.9]{BS}. This is not true for a Borel-fixed ideal.
The zero-generic initial ideal of a homogeneous ideal $I\subset K[x_1,\dots,x_n]$, w.r.t.~the degrevlex term order, is a strongly stable ideal which preserves the property to be saturated, the Hilbert function and the regularity \cite[Prop. 2.1 and Rem. 3.2]{CS2015}. 
\end{remark}

It is well known that the first derivative of the Hilbert function of a scheme is an O-sequence, and the converse is true thanks to \cite[Corollary 3.4]{GMR}. 

More precisely, let $X\subset \mathbb P^n_K$ be a scheme of dimension $k>0$, with defining ideal $I=I(X)$,  $h\in S_1$ be a general linear form that is not a zero-divisor on $S/I$, and $J:=(I,h)$. Then, the scheme $Z\subset{\mathbb P}^{n-1}_K$ of dimension $k-1$ defined by the saturated ideal $(J/(h))^{\sat} = ((I,h)/(h))^{\sat}$ is a general hyperplane section of $X$. The short exact sequence
$0 \rightarrow (S/I)_{t-1} \xrightarrow{\cdot h} (S/I)_t \rightarrow (S/J)_t \rightarrow 0$ \ 
gives $H_{S/J}(t) = \Delta H_{S/I}(t)$, $p_{{S}/{(I,h)}}=\Delta p_{{S}/{I}}$, and then $\varrho_{S/J}=\varrho_X+1$, so that $\Delta H_X(t)=H_{S/J}(t)\geq H_Z(t)$, for every $t$, and $H_{S/J}(t)=H_Z(t)$, for every $t\geq \max\{\varrho_Z,\varrho_{S/J}\}$. 

\begin{lemma}\label{lemma: ex CMR newreg}
With the above notation,
\begin{itemize}
\item[(i)] $\reg(X)=\reg((I(X),h))$ \cite[Lemma (1.8)]{BS}.
\item[(ii)] $\reg(X)=\max\left\{\reg(Z),\varrho_X+1\right\}$ \cite[Lemma 3.6]{CMR}.
\end{itemize}
\end{lemma}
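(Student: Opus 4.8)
Write $I=I(X)$ and $J=(I,h)$. Recall $\reg(X)=\reg(I)$ by Definition~\ref{def:CMregularity}, and I will use the description $\reg(M)=\max\{\,j+i:\ H^i_{\mathfrak m}(M)_j\neq 0\,\}$ of regularity through local cohomology with respect to $\mathfrak m=(x_0,\dots,x_n)$. For part~(i) the content is $\reg(I)=\reg(J)$, and the tool is the short exact sequence of graded $S$-modules
\[
0\longrightarrow (S/I)(-1)\xrightarrow{\cdot h} S/I\longrightarrow S/J\longrightarrow 0,
\]
exact because $h$ is not a zero-divisor on $S/I$, together with the associated long exact sequences of the modules $H^i_{\mathfrak m}(-)$ read degree by degree. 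The inequality $\reg(J)\le\reg(I)$ is formal: from the piece $H^i_{\mathfrak m}(S/I)_j\to H^i_{\mathfrak m}(S/J)_j\to H^{i+1}_{\mathfrak m}(S/I)_{j-1}$ one sees that $H^i_{\mathfrak m}(S/J)_j=0$ whenever $j>\reg(S/I)-i$. For the reverse inequality one uses that $I$ is saturated, so $H^0_{\mathfrak m}(S/I)=0$ and only the modules $H^i_{\mathfrak m}(S/I)$ with $i\ge 1$ contribute to $\reg(S/I)$: if such a module is non-zero with top degree $e$, then in degree $e+1$ the long exact sequence gives $H^{i-1}_{\mathfrak m}(S/J)_{e+1}\to H^i_{\mathfrak m}(S/I)_e\xrightarrow{\cdot h}H^i_{\mathfrak m}(S/I)_{e+1}=0$, so $H^{i-1}_{\mathfrak m}(S/J)_{e+1}$ surjects onto $H^i_{\mathfrak m}(S/I)_e\neq 0$, hence is non-zero, and $\reg(S/J)\ge (e+1)+(i-1)=e+i$; maximizing over $i\ge 1$ gives $\reg(S/I)\le\reg(S/J)$. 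This is \cite[Lemma (1.8)]{BS}.

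For part~(ii), by part~(i) we have $\reg(X)=\reg(J)$. As $h$ is a nonzero linear form, $\bar S:=S/(h)$ is a polynomial ring in $n$ variables; set $\bar J:=J/(h)$, so that $\bar S/\bar J\cong S/J$ as graded modules. Since Castelnuovo--Mumford regularity of a module does not depend on the polynomial ring over which it is presented, $\reg(\bar S/\bar J)=\reg(X)-1$. The general hyperplane section $Z\subset\mathbb P^{n-1}_K$ has defining ideal $\bar J^{\sat}$, and in
\[
0\longrightarrow \bar J^{\sat}/\bar J\longrightarrow \bar S/\bar J\longrightarrow \bar S/\bar J^{\sat}\longrightarrow 0
\]
the left-hand term equals $H^0_{\mathfrak m}(\bar S/\bar J)$ and has finite length, because $\bar S/\bar J^{\sat}$ has positive depth (its Krull dimension is $\dim Z+1=k\ge 1$). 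Hence the long exact sequence in local cohomology gives $H^i_{\mathfrak m}(\bar S/\bar J)\cong H^i_{\mathfrak m}(\bar S/\bar J^{\sat})$ for all $i\ge 1$, so
\[
\reg(X)=\max\bigl\{\,d+1,\ \reg(Z)\,\bigr\},\qquad d:=\max\{\,t:\ (\bar J^{\sat}/\bar J)_t\neq 0\,\}.
\]

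It remains to match this with $\max\{\varrho_X+1,\reg(Z)\}$. Since $(\bar J^{\sat}/\bar J)_t=H_{\bar S/\bar J}(t)-H_Z(t)=\Delta H_X(t)-H_Z(t)\ge 0$, and since the discussion preceding the statement gives $p_{S/J}=\Delta p_X$, $\varrho_{S/J}=\varrho_X+1$, and $H_{S/J}(t)=H_Z(t)$ for $t\ge\max\{\varrho_Z,\varrho_{S/J}\}$, it follows that $d\le\max\{\varrho_Z,\varrho_X+1\}-1$. A short case analysis then concludes. If $\varrho_Z\ge\varrho_X+1$: then $d+1\le\varrho_Z\le\reg(Z)$ and $\varrho_X+1\le\varrho_Z\le\reg(Z)$, so $\max\{d+1,\reg(Z)\}=\reg(Z)=\max\{\varrho_X+1,\reg(Z)\}$. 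If $\varrho_Z\le\varrho_X$: then $H_Z(\varrho_X)=p_Z(\varrho_X)=\Delta p_X(\varrho_X)$ (note $p_Z=p_{S/J}=\Delta p_X$), whereas $\Delta H_X(\varrho_X)=H_{S/J}(\varrho_X)\neq p_{S/J}(\varrho_X)=\Delta p_X(\varrho_X)$ because $\varrho_{S/J}=\varrho_X+1$, so $(\bar J^{\sat}/\bar J)_{\varrho_X}\neq 0$ while nothing of larger degree lies in the support, i.e.\ $d=\varrho_X$, and the two maxima agree. This is \cite[Lemma 3.6]{CMR}.

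The only genuinely non-formal ingredient is the reverse inequality of part~(i), $\reg(I)\le\reg((I,h))$, which really uses that $I$ is saturated (it is false in general) and is the substance of \cite[Lemma (1.8)]{BS}; granting it, part~(ii) is essentially bookkeeping with the sequence $0\to\bar J^{\sat}/\bar J\to\bar S/\bar J\to\bar S/\bar J^{\sat}\to 0$ and the Hilbert-function identities already available, the one point needing care being the degenerate case where $J/(h)$ is already saturated, in which $\reg(Z)$ has to absorb the term $\varrho_X+1$ --- precisely the first branch of the case analysis above.
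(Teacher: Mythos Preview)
The paper does not give a proof of this lemma at all; it simply states the two parts with citations to \cite{BS} and \cite{CMR}. Your proposal therefore supplies what the paper omits, and the argument you outline is the standard local-cohomology proof and is essentially correct.

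One small correction in part~(ii): you write that $\bar S/\bar J^{\sat}$ has positive depth ``(its Krull dimension is $\dim Z+1=k\ge 1$)''. Positive Krull dimension does not by itself give positive depth; the correct justification is simply that $\bar J^{\sat}$ is saturated, which is exactly the statement $H^0_{\mathfrak m}(\bar S/\bar J^{\sat})=0$. With that fixed, the identification $\bar J^{\sat}/\bar J=H^0_{\mathfrak m}(\bar S/\bar J)$ and the isomorphisms $H^i_{\mathfrak m}(\bar S/\bar J)\cong H^i_{\mathfrak m}(\bar S/\bar J^{\sat})$ for $i\ge 1$ follow cleanly, and your case analysis matching $d+1$ with $\varrho_X+1$ is fine (using the identity $\varrho_{S/J}=\varrho_X+1$ recorded just before the lemma).
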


The quoted relation between the first derivative of the Hilbert function of $X$ and the Hilbert function of $Z$ suggests to consider the following partial order.

\begin{definition}\cite{R}\label{order} 
Given two sequences of integers $A=(a_i)_{i\in \mathbb N}$ and $B=(b_i)_{i\in \mathbb N}$, we let $A\preceq B$, if $a_i\leq b_i$ for every index $i$. 
\end{definition}

The polynomials $p(z)\in \mathbb Q[z]$ that are Hilbert polynomials of schemes are called {\em admissible} and are completely characterized in \cite{H66}. Given an admissible polynomial $p(z)$ with positive degree, its first derivative $\Delta p(z)$ is admissible too, because it is the Hilbert polynomial of a general hyperplane section of a scheme. 

The {\em Gotzmann number} $r_{p(z)}$ of an admissible polynomial $p(z)$ is the best upper bound for the Castelnuovo-Mumford regularity of a scheme having $p(z)$ as Hilbert polynomial (see \cite{Go}). 
Precisely, $r_{p(z)}$ is the regularity of the unique saturated lex-segment ideal defining a scheme with Hilbert polynomial $p(z)$. 
I refer to \cite{Gr} for an overview of these arguments and here only recall that if the degree of $p(z)$ is positive then $r_{\Delta p(z)} \leq r_{p(z)}$.

\begin{remark}\label{rem:varie lex}
Let $I\subseteq S$ be a homogeneous ideal, $u$ and $p(z)$ the Hilbert function and the Hilbert polynomial of $S/I$, respectively. Then, $\reg(I)\leq \reg(\lex(u))=\min\{j\geq \varrho_u \ \vert \ (p(t)_t)^+_+ = p(t+1), \forall t\geq j\}= \max\{\varrho_u, r_{p(z)}\}$ (see \cite{Bigatti, Hulett, Pardue97} and \cite[Theorem 2.5]{CMS}). 
\end{remark}

From now, I denote by $F(p(z),\varrho)$ the set of the Hilbert functions of schemes with regularity $\varrho$ and Hilbert polynomial $p(z)$. Moreover, for every $u\in F(p(z),\varrho)$, I denote by $m_u$ (resp.~$M_u$) the minimal (resp.~maximal) possible Castelnuovo-Mumford regularity of a scheme with Hilbert function $u$. 

The integer $m_u$ is computed and also characterized by a closed formula (see \cite[Theorem~8.4]{CLMR2015} and Theorem~\ref{th:minreg}). The integer $M_u$ can be characterized in the following way. 

\begin{proposition}\label{prop: max reg} {\rm (Maximal Castelnuovo-Mumford regularity for a scheme with a given Hilbert function)}
For every $u\in F(p(z),\varrho)$, $M_u=\reg(\lex(\Delta u))$.
\end{proposition}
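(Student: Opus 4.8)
The plan is to establish the two inequalities $M_u \leq \reg(\lex(\Delta u))$ and $M_u \geq \reg(\lex(\Delta u))$ separately, the first by a general bounding argument and the second by exhibiting a specific scheme attaining the bound.

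For the upper bound, let $X \subset \mathbb P^n_K$ be any scheme with $H_X = u$, let $I = I(X)$ be its saturated defining ideal, and let $h \in S_1$ be a general linear form that is not a zero-divisor on $S/I$ (such an $h$ exists since $K$ is infinite; if $\dim X = 0$ one handles the degenerate case directly, but note $\Delta u$ is still the Hilbert function of the hyperplane section in the usual sense). Set $J = (I,h)$. By the short exact sequence recalled before Lemma~\ref{lemma: ex CMR newreg}, $H_{S/J} = \Delta H_{S/I} = \Delta u$, and by Lemma~\ref{lemma: ex CMR newreg}(i), $\reg(X) = \reg(I) = \reg(J)$. Now $J$ is a homogeneous ideal in (essentially) $n$ variables with Hilbert function $\Delta u$ of the quotient, so by Remark~\ref{rem:varie lex} applied to $J$ we get $\reg(J) \leq \reg(\lex(\Delta u))$. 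Hence $\reg(X) \leq \reg(\lex(\Delta u))$ for every such $X$, which gives $M_u \leq \reg(\lex(\Delta u))$.

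For the lower bound, I would construct a scheme $X_0$ with $H_{X_0} = u$ and $\reg(X_0) = \reg(\lex(\Delta u))$. The natural candidate is obtained from $L := \lex(\Delta u)$, which is a strongly stable (indeed lex) ideal in $K[x_1,\dots,x_n]$ with $\reg(L)$ equal to the maximal degree of its minimal generators (Remark~\ref{rem:zero-generic}). One then wants a saturated ideal $I_0 \subset S = K[x_0,\dots,x_n]$ with $H_{S/I_0} = u$ and $(I_0, x_0)/(x_0)$ essentially equal to $L$; the candidate is the lex-segment ideal $\lex(u)$ itself, or a suitable lifting of $L$. Using Remark~\ref{rem:varie lex} once more, $\reg(\lex(u)) = \max\{\varrho_u, r_{p(z)}\}$, and one checks that this equals $\max\{\varrho_{\Delta u}, r_{\Delta p(z)}\} = \reg(\lex(\Delta u))$: indeed $\varrho_{\Delta u} = \varrho_u - 1$ while the Gotzmann numbers satisfy $r_{\Delta p(z)} \leq r_{p(z)}$, and a direct comparison of the two maxima — using the characterizations in Remark~\ref{rem:varie lex} and the fact that $\lex(\Delta u)_t = \Delta(\lex(u))_t$ in the relevant range — yields the desired equality. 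Thus $X_0 := \Proj(S/\lex(u))$ achieves regularity $\reg(\lex(\Delta u))$, so $M_u \geq \reg(\lex(\Delta u))$.

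The main obstacle I anticipate is the bookkeeping in the lower-bound step: one must verify carefully that $\reg(\lex(u))$, which a priori is $\max\{\varrho_u, r_{p(z)}\}$, really coincides with $\reg(\lex(\Delta u)) = \max\{\varrho_{\Delta u}, r_{\Delta p(z)}\}$, rather than being strictly larger. This requires comparing the "plateau" degree of $\lex(u)$ with that of its hyperplane section, and ruling out the case where $\lex(u)$ has a minimal generator in a degree forced by the ambient variable $x_0$ that is higher than anything appearing for $\lex(\Delta u)$. The cleanest route is probably to invoke the explicit description $\reg(\lex(u)) = \min\{j \geq \varrho_u \mid (p(t)_t)^+_+ = p(t+1)\ \forall t \geq j\}$ together with the analogous formula for $\Delta u$, and to translate the Macaulay-growth condition for $u$ into the one for $\Delta u$ via the identity $\Sigma \Delta u = u$; alternatively, one can cite that the lex-segment ideal $\lex(u)$ in $S$ is the iterated lex-lift of $\lex(\Delta u)$ and that such lifts preserve regularity, which is exactly the content one expects from the theory recalled here. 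Once this numerical identity is in hand, both inequalities combine to give $M_u = \reg(\lex(\Delta u))$.
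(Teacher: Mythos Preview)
Your upper bound is correct and is exactly what the paper does: apply Lemma~\ref{lemma: ex CMR newreg}(i) to pass to $(I,h)$, which has Hilbert function $\Delta u$, and then bound by Remark~\ref{rem:varie lex}.

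The lower bound, however, has a genuine gap. You propose $X_0=\Proj(S/\lex(u))$, but $\lex(u)\subset S$ is in general \emph{not} saturated, so $X_0$ need not have Hilbert function $u$ at all; and the identity $\reg(\lex(u))=\reg(\lex(\Delta u))$ that you try to check is in general \emph{false}. (You also write $\varrho_{\Delta u}=\varrho_u-1$; in fact $\varrho_{\Delta u}=\varrho_u+1$, cf.\ the discussion before Lemma~\ref{lemma: ex CMR newreg}.) A concrete counterexample: for $u=(1,3,3,3,\dots)$ in $\mathbb P^2$ one has $\lex(u)=(x_2^2,x_1x_2,x_0x_2,x_1^3)$, which is not saturated and has regularity $3$, whereas $\lex(\Delta u)=(x_2^2,x_1x_2,x_1^2)\subset K[x_1,x_2]$ has regularity $2$; moreover $\lex(u)^{\sat}=(x_2,x_1^3)$ has Hilbert function $(1,2,3,3,\dots)\neq u$. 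So neither your numerical identity nor the claim that $\lex(u)$ is the ``iterated lex-lift'' of $\lex(\Delta u)$ holds.

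The paper's fix is the one you gesture at with ``a suitable lifting of $L$'' but do not carry out: simply take the extension $\lex(\Delta u)\cdot S\subset S$. Because $\lex(\Delta u)\subset K[x_1,\dots,x_n]$ is strongly stable and involves no $x_0$, the extension is a saturated strongly stable ideal in $S$; its hyperplane section by $x_0$ recovers $\lex(\Delta u)$, so its Hilbert function is $\Sigma(\Delta u)=u$; and by Lemma~\ref{lemma: ex CMR newreg}(i) its regularity equals $\reg(\lex(\Delta u))$. This single line replaces all of the bookkeeping you anticipate and avoids the obstruction entirely.
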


\begin{proof}
$M_u\leq \reg(\lex(\Delta u))$ by Lemma \ref{lemma: ex CMR newreg}(i) and Remark \ref{rem:varie lex}. If $\lex(\Delta u)$ is contained in $K[x_1,\dots,x_n]$, then $\lex(\Delta u)\cdot S$ is a saturated strongly stable ideal in $S$ and defines a scheme $X$ with Hilbert function $u$ and $\reg(X)=\reg(\lex(\Delta u)\cdot S)=\reg(\lex(\Delta u))$ by Lemma \ref{lemma: ex CMR newreg}(i). 
\end{proof}

It is noteworthy that every generic initial ideal that is considered in the proofs of the paper \cite{CLMR2015} can be replaced by a zero-generic initial ideal, with the consequence that all the results of \cite{CLMR2015} hold over every infinite field. The authors of \cite{CS2015} have already highlighted that the results presented, in particular, in \cite[Theorem A]{CLMR2015} hold in any characteristic \cite[Remark 3.2 and Theorem 3.3]{CS2015}.

For the sake of completeness, I now re-state \cite[Theorem 7.4]{CLMR2015} over an infinite field and also highlight some further implications that will be crucial in our arguments.

\begin{theorem}{\rm (Expanded lifting over an infinite field)}\label{th:construction} 
Let $u$ belong to $F(p(z),\varrho)$. If $Z$ is a scheme with Hilbert polynomial $\Delta p(z)$ and  Hilbert function $g$ such that $g\preceq \Delta u$, then there is a scheme $X$ with $H_X=u$ and $\reg(X)=\max\{\reg(Z),\varrho+1\}$. Equivalently, there exists a saturated strongly stable ideal with Hilbert function $u$ and regularity $\max\{\reg(Z),\varrho+1\}$.
\end{theorem}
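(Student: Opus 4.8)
The statement to prove is Theorem~\ref{th:construction}, the re-statement of expanded lifting over an infinite field. The strategy is to reduce everything to the characteristic-zero result \cite[Theorem~7.4]{CLMR2015} by passing through the rational points and zero-generic initial ideals, exactly as advertised in the remarks preceding the statement. The key point is that the original proof in \cite{CLMR2015} is an explicit construction (an "expanded lifting" of a saturated lex-segment ideal associated with $\Delta p(z)$ and $g$), and every step of it only ever invokes a generic initial ideal in order to produce a strongly stable ideal with prescribed numerical data; each such invocation can be replaced verbatim by the zero-generic initial ideal $\gin_\sigma(\gin_\sigma(\cdot)_{\mathbb Q})_K$, which by \cite[Prop.~2.2(i)]{CS2015} is strongly stable in every characteristic and, by Remark~\ref{rem:zero-generic}, preserves saturation, Hilbert function, and regularity.

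\medskip

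\textbf{Steps.} First I would recall that, given the scheme $Z\subset\mathbb P^{n-1}_K$ with Hilbert polynomial $\Delta p(z)$ and Hilbert function $g\preceq\Delta u$, we may replace its defining ideal $I(Z)$ by its zero-generic initial ideal with respect to the degrevlex order; call it $J'\subset K[x_1,\dots,x_n]$. Then $J'$ is a saturated strongly stable ideal with Hilbert function $g$ and $\reg(J')=\reg(Z)$. Second, I would run the expanded lifting construction of \cite[Theorem~7.4]{CLMR2015} on the combinatorial data $(u,\varrho,J')$: this is a purely monomial construction — it builds a monomial ideal $J\subset K[x_0,\dots,x_n]$ from $J'$ by a "lifting in the variable $x_0$" prescribed degree-by-degree so that $H_{S/J}=u$ — and, being combinatorial, it is characteristic-free once the input ideal is strongly stable. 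Third, I would verify that the output $J$ is again strongly stable and saturated; this is where one checks that the lifting recipe respects the Borel moves involving $x_0$, which in \cite{CLMR2015} is done via the interplay between $\Delta u$, $g$, and the shape of $J'$ in each degree. Fourth, by Remark~\ref{rem:zero-generic} (strongly stable $\Rightarrow$ regularity equals top generator degree) together with Lemma~\ref{lemma: ex CMR newreg}(ii) applied to $X:=\Proj(S/J)$ and its general hyperplane section — whose zero-generic initial ideal is $J'$ — one reads off $\reg(X)=\max\{\reg(J'),\varrho_X+1\}=\max\{\reg(Z),\varrho+1\}$, using that $\varrho_X=\varrho_u=\varrho$ because $H_{S/J}=u$. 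Finally, the "equivalently" clause is automatic: $J$ itself is the promised saturated strongly stable ideal.

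\medskip

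\textbf{Main obstacle.} The genuine work is in the third step: confirming that the expanded-lifting recipe, transplanted from the characteristic-zero setting, still produces a \emph{strongly stable} (not merely Borel-fixed) ideal when fed the zero-generic initial ideal $J'$ instead of an ordinary $\gin$. In characteristic zero this is transparent because $\gin$ is already strongly stable; here one must check that the definition of the lifting — which selects, in each degree $t$, a set of monomials of the form $x_0^{a}\cdot(\text{monomial of }J'_{t-a})$ together with a lex-type completion — is compatible with all Borel exchanges $x^\alpha x_j/x_i$ for $x_j\succ x_i$, including the exchanges that move exponent onto or off of $x_0$. Since $J'$ is strongly stable by \cite[Prop.~2.2(i)]{CS2015} and the completion is taken lex (hence strongly stable) in each degree, this compatibility goes through degree by degree, but it is the step that must be written out with care rather than cited. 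Everything else — preservation of Hilbert function and regularity, the computation of $\reg(X)$ — follows formally from the cited lemmas and remarks.
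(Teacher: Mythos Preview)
Your proposal is correct and follows essentially the same approach as the paper: replace the ordinary generic initial ideal $\gin_\sigma(I(Z))$ appearing in the proof of \cite[Theorem~7.4]{CLMR2015} by the zero-generic initial ideal of $I(Z)$, which is strongly stable with the same Hilbert function and regularity, and then run the expanded-lifting construction verbatim. The paper's own proof is considerably terser---it simply asserts that the original construction goes through once this replacement is made---whereas you helpfully isolate the one point that actually needs checking (that the lifted ideal remains strongly stable, not merely Borel-fixed).
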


\begin{proof}
The first part of the statement is exactly that of \cite[Theorem 7.4]{CLMR2015}. In order to show that it  holds over an infinite field, let $\sigma$ be the degrevlex term order. Then, in the proof of \cite[Theorem 7.4]{CLMR2015} that has been given in the paper \cite{CLMR2015} we can replace the generic initial ideal $\mathrm{gin}_\sigma(I(Z))$ of the saturated ideal $I(Z)$, defining the scheme $Z$, by the zero-generic initial ideal of $I(Z)$. 
Recall that this ideal is a strongly stable saturated ideal with the same Hilbert function and regularity of $I(Z)$. 
The second affirmation holds because we can apply to the zero-generic initial ideal of $I(Z)$ exactly the same construction that is instead performed on $\mathrm{gin}_\sigma(I(Z))$ in the proof of \cite[Theorem 7.4]{CLMR2015}, being a zero-generic initial ideal strongly stable.
\end{proof}


\section{From minimal functions to the main result}\label{sec:minimalFunctions}

In this section, old and new properties of minimal functions are collected, and then they are used to prove the main result of this paper. 

Given a Hilbert polynomial $p(z)$, the set $\Pi_{p(z)}:=\{1 \leq t \leq r : (p(t+h)_{t+h})^+_+ \geq p(t+h+1)\geq 1, \forall \ h\geq 0\}$ is non-empty and the integer 
\begin{equation}\label{eq:rho}
\varrho_{p(z)}:=\left\{\begin{array}{ll}
0, & \text{ if }\min\ \Pi_{p(z)}=1 \text{ and } p(0)=1 \\
\min\ \Pi_{p(z)}, & \text{ otherwise}
\end{array}\right.
\end{equation}
is the minimal regularity of Hilbert functions with Hilbert polynomial $p(z)$ \cite[Proposition~4.2]{CLMR2015}.
Further, the integer 
\begin{equation}\label{eq:barrho}
\bar \varrho_{p(z)}:= \max\{\varrho_{p(z)},\varrho_{\Delta p(z)}-1\}
\end{equation}
is the minimal regularity of Hilbert functions of schemes with Hilbert polynomial $p(z)$, in particular $F(p(z),\bar\varrho_{p(z)})\not= \emptyset$ \cite[Theorem~4.9]{CLMR2015}.

Like it will be recalled in Proposition~\ref{prop:minimalfunction}, for every $\varrho\geq \varrho_{p(z)}$ there exists the minimum among the Hilbert functions with Hilbert polynomial $p(z)$ and regularity $\varrho$ with respect to the partial order $\preceq$ of Definition~\ref{order}. This minimal function is of the following type.

\begin{definition} \label{def:minimal functions}\cite[Definition 4.1]{CLMR2015}
For every $\varrho\geq \varrho_{p(z)}$, we let
$$f_{p(z)}^{\varrho}(t):=\left\{\begin{array}{ll}
p(t), & \text{ if } t\geq \varrho, \\
(f_{p(z)}^{\varrho}(t+1)_{t+1})^-_- , & \text{ otherwise;}
\end{array}\right.$$ 
$$g_{p(z)}^{\varrho}(t):=\left\{\begin{array}{ll} p(t), & \text{ if } t\geq \varrho, \\ p(t)+1, &\text{ if } t=\varrho-1, \\(g_{p(z)}^{\varrho}(t+1)_{t+1})^-_- , & \text{ otherwise.}\end{array}\right.$$
\end{definition}

\begin{proposition}\label{prop:minimalfunction}\cite[Propositions 4.2 and 4.5, Lemma 4.8(iii)]{CLMR2015}\\
\noindent For every integer $\varrho$ such that $\varrho_{p(z)}\leq \varrho\leq r_{p(z)}-1$,
\begin{itemize}
\item[(1)] $f_{p(z)}^{\varrho}$ is an O-sequence, $f_{p(z)}^\varrho\succeq f_{p(z)}^{\varrho+1}$ and $f_{p(z)}^{\varrho}$ is the minimal Hilbert function with Hilbert polynomial $p(z)$ and regularity $\leq \varrho$. In particular, $f_{p(z)}^{\varrho_{p(z)}}$ has regularity $\varrho_{p(z)}$. 
\item[(2)] $g_{p(z)}^{\varrho}$ is an O-sequence; if the regularity of $f^\varrho_{p(z)}$ is $\varrho'<\varrho$, then $g_{p(z)}^{\varrho}$ is the minimal O-sequence with regularity $\varrho$ and Hilbert polynomial $p(z)$, and $f^\varrho_{p(z)}\preceq g_{p(z)}^{\varrho}$.
\end{itemize}
Moreover, for every $\varrho\geq \bar \varrho_{p(z)}$, $\Delta f_{p(z)}^{\varrho}$ is an O-sequence. 
\end{proposition}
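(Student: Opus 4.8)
The statement to prove is that for every $\varrho \geq \bar\varrho_{p(z)}$, the first derivative $\Delta f_{p(z)}^{\varrho}$ is an O-sequence. The natural strategy is to exhibit $\Delta f_{p(z)}^{\varrho}$ as the Hilbert function of $S/I$ for some homogeneous ideal $I$, or equivalently (via Macaulay's theorem, recalled after \cite{Ma}) to verify directly the Macaulay growth bound $\Delta f_{p(z)}^{\varrho}(t+1) \leq (\Delta f_{p(z)}^{\varrho}(t)_t)^+_+$ together with $\Delta f_{p(z)}^{\varrho}(0) = 1$. The geometric route seems cleaner: recall from \eqref{eq:barrho} that $\bar\varrho_{p(z)} = \max\{\varrho_{p(z)}, \varrho_{\Delta p(z)} - 1\}$, and that $F(p(z), \bar\varrho_{p(z)}) \neq \emptyset$; more generally I would expect that for every $\varrho \geq \bar\varrho_{p(z)}$ one has $F(p(z),\varrho)\neq\emptyset$ and that $f_{p(z)}^{\varrho}$ is itself the minimal such Hilbert function realized by an honest scheme. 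If $X$ is a scheme with $H_X = f_{p(z)}^{\varrho}$, then $\Delta H_X = \Delta f_{p(z)}^{\varrho}$ is the Hilbert function of the general hyperplane section cut computation $H_{S/(I(X),h)}$, hence an O-sequence, and we are done.

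First I would establish that $f_{p(z)}^{\varrho}$ is realizable by a scheme when $\varrho \geq \bar\varrho_{p(z)}$. For $\varrho = \bar\varrho_{p(z)}$ this is \cite[Theorem~4.9]{CLMR2015} as quoted. For $\varrho > \bar\varrho_{p(z)}$, I would invoke Proposition~\ref{prop:minimalfunction}(1): $f_{p(z)}^{\varrho}$ is the minimal Hilbert function with Hilbert polynomial $p(z)$ and regularity $\leq \varrho$, and $f_{p(z)}^{\bar\varrho_{p(z)}} \succeq f_{p(z)}^{\varrho}$. Then I would like to apply Theorem~\ref{th:construction} (expanded lifting): one needs a scheme $Z$ with Hilbert polynomial $\Delta p(z)$ whose Hilbert function $g$ satisfies $g \preceq \Delta f_{p(z)}^{\varrho}$ and $\max\{\reg Z, \varrho+1\}$ is controlled, producing a scheme $X$ with $H_X = f_{p(z)}^{\varrho}$. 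The candidate for $Z$ is the scheme realizing the minimal function $f_{\Delta p(z)}^{\varrho-1}$ (or $g_{\Delta p(z)}^{\varrho-1}$, if a regularity adjustment is needed), which exists by the inductive structure since $\bar\varrho_{\Delta p(z)} \leq \bar\varrho_{p(z)} - 1 \cdots$ — here an induction on $\dim p(z)$ underlies everything, the base case being ideals (zero-dimensional situation) where Macaulay's theorem applies directly.

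The key technical point, and where I expect the main obstacle, is the comparison $g \preceq \Delta f_{p(z)}^{\varrho}$: one must show that the minimal function in dimension $\dim p(z) - 1$ at regularity $\varrho - 1$ lies termwise below the first derivative of the minimal function in dimension $\dim p(z)$ at regularity $\varrho$. This amounts to a delicate inequality between two differently-defined recursive sequences (both built from the $(\cdot)^-_-$ operation, one applied to $p(z)$-values, one to $\Delta p(z)$-values), and it is exactly the kind of combinatorial lemma about Macaulay representations that \cite{CLMR2015} develops — I would try to extract it from the cited Lemma 4.8 or prove it by descending induction on $t$, using that $\Delta$ commutes suitably with the binomial-expansion operations on the stable range $t \geq \varrho$ and then propagating downward via the monotonicity $(\cdot)^-_-$ (i.e., $A \preceq B$ on a tail forces $(\cdot)^-_-$-values to compare). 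Once the realizability of $f_{p(z)}^{\varrho}$ by a scheme $X$ is in hand, the conclusion is immediate: $\Delta f_{p(z)}^{\varrho} = \Delta H_X = H_{S/(I(X),h)}$ for a general linear form $h$, which is an O-sequence by \cite[Corollary~3.4]{GMR} (or simply because it is a Hilbert function of a quotient ring).

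A shorter alternative, avoiding schemes entirely, is worth recording: verify the Macaulay bound for $\Delta f_{p(z)}^{\varrho}$ by hand. On the range $t \geq \varrho$ the values are $p(t) - p(t-1) = \Delta p(t)$, and since $\varrho \geq \bar\varrho_{p(z)} \geq \varrho_{\Delta p(z)} - 1$, the values $\Delta p(t)$ for $t \geq \varrho$ already form the tail of an O-sequence (the minimal function $f_{\Delta p(z)}^{\varrho-1}$ agrees with $\Delta p$ there); for $t < \varrho$ one must check that $\Delta$ of the $(\cdot)^-_-$-recursion still satisfies Macaulay growth, which is precisely the combinatorial heart mentioned above. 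Either way, that growth estimate for the difference sequence across the \lq\lq seam\rq\rq\ at $t = \varrho$ is the step that carries the real content.
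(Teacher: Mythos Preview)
The paper supplies no proof of this proposition: it is quoted wholesale from \cite[Propositions 4.2 and 4.5, Lemma 4.8(iii)]{CLMR2015}, and the ``Moreover'' clause you are attempting is exactly Lemma 4.8(iii) there. So there is no in-paper argument to compare against, only the external citation.

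Your geometric route, as written, is circular. You propose to realize $f_{p(z)}^{\varrho}$ as $H_X$ for some scheme $X$ via Theorem~\ref{th:construction}, and then conclude that $\Delta f_{p(z)}^{\varrho}=H_{S/(I(X),h)}$ is an O-sequence. But the stated hypothesis of Theorem~\ref{th:construction} is that $u\in F(p(z),\varrho)$, i.e.\ that $u$ is already the Hilbert function of a scheme --- which, by the characterization recalled from \cite{GMR}, is equivalent to $\Delta u$ being an O-sequence. Applying the lifting theorem to $u=f_{p(z)}^{\varrho}$ therefore presupposes the very statement you want. The base case is no better: you invoke Theorem~\ref{th:first derivative} for $\varrho=\bar\varrho_{p(z)}$, but in the logical order of \cite{CLMR2015} that theorem (their 4.9) sits \emph{after} Lemma 4.8 and uses it; and in the present paper it is stated after the proposition you are proving.

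Your alternative --- verifying the Macaulay growth bound for $\Delta f_{p(z)}^{\varrho}$ directly from the $(\cdot)^-_-$ recursion --- is the right idea and is presumably what \cite{CLMR2015} actually does in Lemma 4.8. You correctly locate the crux (the seam at $t=\varrho$ and the interaction of $\Delta$ with the binomial-expansion operators), but you only name it, you do not carry it out. In sum: the proposal identifies the real difficulty but does not resolve it, and the geometric shortcut around it begs the question.
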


Recall that we are interested in Hilbert functions of schemes.

\begin{theorem}\label{th:first derivative}\cite[Theorem 4.9]{CLMR2015}
$f^{\bar\varrho_{p(z)}}_{p(z)}$ belongs to $F\big(p(z),\bar\varrho_{p(z)}\big)$ and, 
for all $\varrho>\bar\varrho_{p(z)}$, 
$$F(p(z),\varrho)\not= \emptyset \Leftrightarrow
\left\{\begin{array}{ll} f_{p(z)}^\varrho \in F(p(z),\varrho) &\text{(i.e. } f^\varrho_{p(z)} \text{ has regularity } \varrho) 
\\
g_{p(z)}^\varrho \in F(p(z),\varrho), &\text{ otherwise.} 
\end{array}\right.$$
\end{theorem}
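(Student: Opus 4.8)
The plan is to reduce everything to the expanded lifting (Theorem~\ref{th:construction}) by producing, for the given regularity $\varrho$, a scheme $Z\subset\mathbb P^{n-1}_K$ with Hilbert polynomial $\Delta p(z)$ whose Hilbert function $g$ satisfies $g\preceq \Delta f^{\varrho}_{p(z)}$ (resp.\ $g\preceq \Delta g^{\varrho}_{p(z)}$) and such that $\max\{\reg(Z),\varrho+1\}=\varrho$; then Theorem~\ref{th:construction} immediately delivers a saturated strongly stable ideal realizing the candidate Hilbert function with regularity $\varrho$. For the base case $\varrho=\bar\varrho_{p(z)}$, I would argue that $f^{\bar\varrho_{p(z)}}_{p(z)}$ is already known (Proposition~\ref{prop:minimalfunction}, last sentence) to have $\Delta f^{\bar\varrho_{p(z)}}_{p(z)}$ an O-sequence, hence it is the first derivative of a scheme's Hilbert function by \cite[Corollary 3.4]{GMR}; realizing it requires only checking that the induced hyperplane section $Z$ can be chosen with $\reg(Z)\leq\bar\varrho_{p(z)}+1$, which is exactly the content of how $\bar\varrho_{p(z)}=\max\{\varrho_{p(z)},\varrho_{\Delta p(z)}-1\}$ was defined.

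For $\varrho>\bar\varrho_{p(z)}$, I would split into the two cases of the statement. If $f^{\varrho}_{p(z)}$ has regularity exactly $\varrho$, then by Proposition~\ref{prop:minimalfunction} (the "Moreover" clause, valid since $\varrho\geq\bar\varrho_{p(z)}$) $\Delta f^{\varrho}_{p(z)}$ is an O-sequence, so there is a scheme $Z$ with that first-derivative Hilbert function; one must then verify $\reg(Z)\le\varrho$, i.e.\ that $Z$ — which one may take defined by $\lex(\Delta^2 f^{\varrho}_{p(z)})$ or by a hyperplane section of a minimally-lifted scheme — does not force regularity above $\varrho$. The key numerical input here is that $\Delta f^{\varrho}_{p(z)}$ has Hilbert polynomial $\Delta p(z)$ and, by construction of the minimal functions (Definition~\ref{def:minimal functions}, the $()^-_-$ recursion), agrees with $\Delta p(z)$ from degree $\varrho-1$ on, so its own regularity as an O-sequence is $\le\varrho-1$; combining with $r_{\Delta p(z)}\le r_{p(z)}$ and Remark~\ref{rem:varie lex} bounds $\reg(Z)\le\varrho$. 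The case where $f^{\varrho}_{p(z)}$ has regularity $\varrho'<\varrho$ is handled identically with $g^{\varrho}_{p(z)}$ in place of $f^{\varrho}_{p(z)}$, using that $g^{\varrho}_{p(z)}$ was designed precisely to bump the regularity up to $\varrho$ while staying an O-sequence, and that $\varrho\geq\bar\varrho_{p(z)}$ again guarantees $\Delta g^{\varrho}_{p(z)}$ is an O-sequence (this should follow from the same argument as for $f$, since $g^{\varrho}_{p(z)}$ differs from $f^{\varrho}_{p(z)}$ only in a controlled way at and below degree $\varrho-1$).

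The main obstacle I anticipate is the regularity bookkeeping for the hyperplane-section scheme $Z$: one needs $\reg(Z)=\varrho-1$ (not merely $\le\varrho-1$) in the first case — otherwise $\max\{\reg(Z),\varrho+1\}$ could come out as $\varrho+1$ instead of $\varrho$ — wait, that is not an issue since $\varrho+1>\varrho$ would be the wrong answer; rather the point is that $\max\{\reg(Z),\varrho+1\}$ must equal $\varrho$, which forces $\reg(Z)=\varrho$ when $\varrho\ge\varrho$... let me restate: Theorem~\ref{th:construction} produces regularity $\max\{\reg(Z),\varrho+1\}$ where here $\varrho$ plays the role of the section scheme's ambient $\varrho$, namely $\varrho_X$. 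So to get $\reg(X)=\varrho$ exactly I need $\varrho_X=\varrho-1$ and $\reg(Z)\le\varrho$, with equality $\reg(Z)=\varrho$ forced in the degenerate subcase. Pinning down that $\varrho_{f^{\varrho}_{p(z)}}=\varrho$ (resp.\ $\varrho_{g^{\varrho}_{p(z)}}=\varrho$) is exactly the hypothesis in each branch, so the real work is: (a) confirm $\Delta f^{\varrho}_{p(z)}$ (resp.\ $\Delta g^{\varrho}_{p(z)}$) is an O-sequence with regularity $\le\varrho-1$, and (b) pick $Z$ realizing it with $\reg(Z)\le\max\{\reg(\lex(\Delta^2 f^{\varrho}_{p(z)})),\varrho\}=\varrho$ via Lemma~\ref{lemma: ex CMR newreg} and Remark~\ref{rem:varie lex}. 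Everything else is a direct appeal to the machinery already assembled.
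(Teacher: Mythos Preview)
The paper does not prove Theorem~\ref{th:first derivative}: it is quoted verbatim from \cite[Theorem 4.9]{CLMR2015} and stated without argument, so there is no ``paper's own proof'' to compare against. That said, your proposal has two genuine problems that would need fixing before it could stand as a proof.

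First, there is a circularity. Theorem~\ref{th:construction} (expanded lifting), as stated in this paper, has ``let $u$ belong to $F(p(z),\varrho)$'' as a \emph{hypothesis}. You propose to apply it with $u=f^{\varrho}_{p(z)}$ (or $g^{\varrho}_{p(z)}$) in order to \emph{conclude} that this function lies in $F(p(z),\varrho)$; that begs the question. In \cite{CLMR2015} the logical order is the reverse --- Theorem~4.9 there comes first and the expanded lifting (Theorem~7.4) is proved afterwards and uses it --- so invoking the lifting here is not a shortcut.

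Second, you are conflating two different ``regularities''. Membership $u\in F(p(z),\varrho)$ means that $u$ is the Hilbert function of \emph{some} scheme and that the \emph{function} regularity $\varrho_u$ equals $\varrho$ (Definition~\ref{def:regfunction}); it says nothing about the Castelnuovo--Mumford regularity of the realizing scheme. Your attempt to force $\max\{\reg(Z),\varrho+1\}=\varrho$ (which is arithmetically impossible) and the subsequent bookkeeping all stem from reading the statement as if it demanded $\reg(X)=\varrho$. What actually needs to be shown is much simpler for the $f^{\varrho}_{p(z)}$ branch: by the last line of Proposition~\ref{prop:minimalfunction}, $\Delta f^{\varrho}_{p(z)}$ is an O-sequence for every $\varrho\ge\bar\varrho_{p(z)}$, hence (via \cite[Corollary~3.4]{GMR}) $f^{\varrho}_{p(z)}$ is the Hilbert function of a scheme; combined with the case hypothesis $\varrho_{f^{\varrho}_{p(z)}}=\varrho$, this already gives $f^{\varrho}_{p(z)}\in F(p(z),\varrho)$ with no lifting needed. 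The substantive content you are missing is the $g^{\varrho}_{p(z)}$ branch of the forward implication: when $f^{\varrho}_{p(z)}$ has regularity $<\varrho$, one must prove that $\Delta g^{\varrho}_{p(z)}$ is an O-sequence \emph{assuming} $F(p(z),\varrho)\neq\emptyset$. This does not follow from Proposition~\ref{prop:minimalfunction} as stated (the ``Moreover'' clause covers only $f$, not $g$), and your parenthetical ``this should follow from the same argument as for $f$'' is precisely the gap --- it is the point where the nontrivial work in \cite{CLMR2015} lies.
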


\begin{theorem} \cite[Theorem 8.4]{CLMR2015} {\rm (Minimal Castelnuovo-Mumford regularity for a scheme with a given Hilbert function)} \label{th:minreg}
Let $u\in F(p(z),\varrho)$ and $k$ be the degree of $p(z)$. Consider $\tilde\varrho_u :=\min\{t\geq \bar\varrho_{\Delta p(z)} : f^t_{\Delta p(z)}\preceq \Delta u\}$ and $\tilde f:=f^{\tilde\varrho_u}_{\Delta p(z)}\in F(\Delta p(z),\tilde\varrho_u)$. 
If $k=0$ then $m_u=\varrho+1$, if $k>0$ then $m_u=\max\{m_{\tilde f}, \varrho+1\}$.
\end{theorem}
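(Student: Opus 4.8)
The plan is to observe first that this is \cite[Theorem~8.4]{CLMR2015}, proved there over a field of characteristic zero, and to run the same reduction already used for Theorem~\ref{th:construction}: in the characteristic-zero argument, replace every generic initial ideal by the corresponding zero-generic initial ideal, which is strongly stable and preserves the Hilbert function, saturation and regularity \cite{CS2015}; this transports the statement to an arbitrary infinite field $K$. So it suffices to sketch the argument in characteristic zero, which I would carry out by induction on $k=\deg p(z)$.

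\emph{Base case $k=0$.} Every scheme $X$ with $H_X=u$ is then zero-dimensional with $\varrho_X=\varrho_u=\varrho$; since $I(X)$ is saturated and $\dim S/I(X)=1$, one has $H^i_{\mathfrak m}(S/I(X))=0$ for $i\neq 1$, while the degree-$t$ piece of $H^1_{\mathfrak m}(S/I(X))$ has dimension $p(t)-H_X(t)$, which is $0$ exactly for $t\geq\varrho$. Hence $\reg(X)=\reg(I(X))=\varrho+1$ for every such $X$, so $m_u=\varrho+1$.

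\emph{Inductive step $k>0$.} For any scheme $X$ with $H_X=u$ and a general hyperplane section $Z$ of $X$ one has $p_Z=\Delta p(z)$, $H_Z\preceq\Delta H_X=\Delta u$, and $\reg(X)=\max\{\reg(Z),\varrho+1\}$ by Lemma~\ref{lemma: ex CMR newreg}(ii); conversely, Theorem~\ref{th:construction} turns any scheme $Z'$ with $p_{Z'}=\Delta p(z)$ and $H_{Z'}\preceq\Delta u$ into a scheme $X'$ with $H_{X'}=u$ and $\reg(X')=\max\{\reg(Z'),\varrho+1\}$. Writing $m_g$ for the minimal Castelnuovo--Mumford regularity of a scheme with Hilbert function $g$, and minimizing over $X$ and $Z'$, these two facts give
$$m_u=\max\Big\{\varrho+1,\ \min\{\,m_g: g\text{ a scheme Hilbert function},\ p_g=\Delta p(z),\ g\preceq\Delta u\,\}\Big\},$$
so I must identify the inner minimum with $m_{\tilde f}$. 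One inequality is free: by Proposition~\ref{prop:minimalfunction} and Theorem~\ref{th:first derivative}, $\tilde f=f^{\tilde\varrho_u}_{\Delta p(z)}$ is itself a scheme Hilbert function with Hilbert polynomial $\Delta p(z)$ and $\tilde f\preceq\Delta u$, hence one of the competitors. For the opposite inequality, if $g$ is any competitor then $\varrho_g\geq\bar\varrho_{\Delta p(z)}$ and $f^{\varrho_g}_{\Delta p(z)}\preceq g\preceq\Delta u$ by Proposition~\ref{prop:minimalfunction}, so minimality of $\tilde\varrho_u$ forces $\varrho_{\tilde f}=\tilde\varrho_u\leq\varrho_g$; it then remains to deduce $m_{\tilde f}\leq m_g$ from $\varrho_{\tilde f}\leq\varrho_g$.

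\textbf{The main obstacle} is precisely this last implication. Monotonicity of $m_{(-)}$ with respect to $\preceq$ is false --- smaller Hilbert functions tend to force larger regularities --- so $g$ and $\tilde f$ cannot be compared directly, and one must argue through the Hilbert-function regularity: by a parallel induction on $k$ one establishes that, for a scheme Hilbert function $g$ with $p_g=q(z)$, one has $m_g\geq m_{f^{t}_{q(z)}}$ whenever $\bar\varrho_{q(z)}\le t\leq\varrho_g$ (so the minimal function with regularity $t$ is $m$-optimal among such $g$), the key inputs being that $\varrho\mapsto m_{f^{\varrho}_{q(z)}}$ is non-decreasing and that the first derivatives $\Delta f^{\varrho}_{q(z)}$ decrease with $\varrho$ (cf.\ Proposition~\ref{prop:minimalfunction}), together with the $(k-1)$-dimensional form of the theorem. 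Applying this with $t=\tilde\varrho_u$ yields $m_g\geq m_{f^{\tilde\varrho_u}_{\Delta p(z)}}=m_{\tilde f}$, which closes the inductive step; the details of this monotonicity analysis are the technical heart and are carried out in \cite{CLMR2015}.
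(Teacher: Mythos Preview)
The paper does not give its own proof of this statement: Theorem~\ref{th:minreg} is simply stated with a citation to \cite[Theorem~8.4]{CLMR2015}, and its validity over an arbitrary infinite field is covered by the blanket remark in the paragraph preceding Theorem~\ref{th:construction}, namely that every generic initial ideal appearing in the proofs of \cite{CLMR2015} may be replaced by the corresponding zero-generic initial ideal. Your first paragraph captures exactly this, which is all the present paper supplies.

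The remainder of your proposal --- the inductive reconstruction of the argument from \cite{CLMR2015} --- goes well beyond what the paper does. The sketch is reasonable and, importantly, you correctly isolate the genuine difficulty: the passage from $\tilde\varrho_u\leq\varrho_g$ to $m_{\tilde f}\leq m_g$ is not a formal consequence of $\preceq$-monotonicity and requires the analysis of $\varrho\mapsto m_{f^\varrho_{q(z)}}$ that you describe. Since you explicitly defer that technical core back to \cite{CLMR2015}, your write-up is in effect the same as the paper's, just with more scaffolding around the citation.
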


\begin{remark}\label{rem:varrho'}
If the regularity of $f_{p(z)}^{\varrho}$ is $\varrho'<\varrho$, then $f_{p(z)}^{\varrho'}=f_{p(z)}^{\varrho}$. However, a constant polynomial $p(z)=d$ has Gotzmann number $r_d=d$ and, for every $1\leq \varrho\leq d-1$, the minimal function $f_d^{\varrho}$ has regularity equal to $\varrho$ and belongs to $F(p(z),\varrho)$. Moreover, every scheme $X$ with Hilbert function $f_d^{\varrho}$ has $\reg(X)=\varrho+1$, because $X$ is Cohen-Macaulay. 
\end{remark} 

\begin{proposition}\label{prop:CMS e altro}
If $p(z)$ is a non-constant admissible polynomial, $r:=r_{p(z)}$ is its Gotzmann number and $r>r_{\Delta p(z)}$, then $f^{r-1}_{p(z)}$ belongs to $F(p(z),r-1)$.
\end{proposition}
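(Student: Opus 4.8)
The plan is to exploit the expanded lifting construction of Theorem~\ref{th:construction} together with the minimal-function machinery of Proposition~\ref{prop:minimalfunction}. Since $p(z)$ is non-constant, its degree $k$ is at least $1$, so $\Delta p(z)$ is admissible and $r_{\Delta p(z)}\leq r$ by the remark preceding Remark~\ref{rem:varie lex}; the hypothesis $r>r_{\Delta p(z)}$ gives us strict room to run the induction implicit in the construction. The first step is to check that $f^{r-1}_{p(z)}$ is well-defined and is an O-sequence with Hilbert polynomial $p(z)$: since $r-1\geq r_{p(z)}-1\geq \varrho_{p(z)}-1$, and in fact $r-1\geq\varrho_{p(z)}$ (because for a non-constant admissible polynomial $r_{p(z)}>\varrho_{p(z)}$), Proposition~\ref{prop:minimalfunction}(1) applies and tells us $f^{r-1}_{p(z)}$ is an O-sequence, the minimal one among Hilbert functions with Hilbert polynomial $p(z)$ and regularity $\leq r-1$.

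The second step is to identify $\Delta f^{r-1}_{p(z)}$ and show it is the Hilbert function of a scheme $Z$ with Hilbert polynomial $\Delta p(z)$. The key observation is that $f^{r-1}_{p(z)}$ should coincide on the relevant range with the (first difference of the) Hilbert function of the saturated lex-segment ideal associated to $p(z)$: by the definition of the Gotzmann number $r=r_{p(z)}$ as the regularity of that unique saturated lex-segment ideal, its Hilbert function attains $p(z)$ exactly from $r-1$ on, and minimality forces agreement with $f^{r-1}_{p(z)}$ in the top degrees. More usefully, I would argue directly: since $r-1\geq\bar\varrho_{p(z)}$ (this uses $r-1\geq r_{\Delta p(z)}\geq\varrho_{\Delta p(z)}-1$ and $r-1\geq\varrho_{p(z)}$), the last line of Proposition~\ref{prop:minimalfunction} gives that $\Delta f^{r-1}_{p(z)}$ is an O-sequence. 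Its Hilbert polynomial is $\Delta p(z)$. Now I want a scheme realizing it, i.e.\ I want $\Delta f^{r-1}_{p(z)}\succeq \tilde f$ for the minimal-function $\tilde f=f^{\tilde\varrho}_{\Delta p(z)}$ with $\tilde\varrho=\bar\varrho_{\Delta p(z)}$, or more simply I want $\Delta f^{r-1}_{p(z)}$ itself to have regularity at most $r-1$ as a Hilbert polynomial problem and then invoke Theorem~\ref{th:first derivative} applied to $\Delta p(z)$. Concretely: the O-sequence $\Delta f^{r-1}_{p(z)}$ has Hilbert polynomial $\Delta p(z)$ and regularity $\leq r-1$; since $r-1\geq r_{\Delta p(z)}=r_{\Delta p(z)}$ and $\Delta f^{r-1}_{p(z)}$ dominates the minimal function $f^{r-1}_{\Delta p(z)}$ by Proposition~\ref{prop:minimalfunction}(1), and $f^{r-1}_{\Delta p(z)}\in F(\Delta p(z),\cdot)$ by Theorem~\ref{th:first derivative} together with the hypothesis $r-1\geq r_{\Delta p(z)}>r_{\Delta p(z)}-1\geq \bar\varrho_{\Delta p(z)}$, the domination $f^{r-1}_{\Delta p(z)}\preceq \Delta f^{r-1}_{p(z)}$ lets us pick a scheme $Z$ with Hilbert function $g$ satisfying $g\preceq \Delta f^{r-1}_{p(z)}$ (take $g=f^{r-1}_{\Delta p(z)}$, which is realized by a scheme).

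The third step applies Theorem~\ref{th:construction} with $u=f^{r-1}_{p(z)}$, which lies in $F(p(z),\varrho)$ for its own regularity $\varrho$, and with the scheme $Z$ just constructed: since $g\preceq\Delta u$, there is a scheme $X$ with $H_X=u=f^{r-1}_{p(z)}$ and $\reg(X)=\max\{\reg(Z),\varrho+1\}$. It remains to pin down this maximum as $r-1$. For the Hilbert-function regularity of $f^{r-1}_{p(z)}$: by Remark~\ref{rem:varie lex} and the fact that $r-1\geq r_{p(z)}-1$, one has to check $f^{r-1}_{p(z)}$ has regularity exactly $r-1$, which follows from minimality — if it had regularity $\varrho'<r-1$ then $f^{\varrho'}_{p(z)}=f^{r-1}_{p(z)}$ would be a Hilbert function with Hilbert polynomial $p(z)$ and regularity $\varrho'\le r-2$, whereas the defining recursion forces $f^{r-1}_{p(z)}(r-2)=(p(r-1)_{r-1})^-_-<p(r-2)$ precisely because $r-1\le r_{p(z)}-1$ means the Macaulay growth bound is not yet tight there, contradicting regularity $\le r-2$. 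Hence $\varrho=r-1$ for $f^{r-1}_{p(z)}$, and then from Lemma~\ref{lemma: ex CMR newreg}(ii) and $\reg(Z)\leq r_{\Delta p(z)}\leq r-1=\varrho$ we get $\reg(X)=\max\{\reg(Z),\varrho+1\}=\varrho+1=r$...

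\emph{Caveat.} The above overshoots by one, so the genuinely delicate point — and the main obstacle — is not the existence of $X$ but the \emph{exact} regularity count. The correct bookkeeping is that the scheme whose existence we want has \emph{Castelnuovo-Mumford} regularity $r-1$, while $f^{r-1}_{p(z)}$ has \emph{Hilbert-function} regularity $r-1$; for a scheme $X$ with $\reg(X)=r-1$ one needs $\varrho_X=r-2$, i.e.\ we should run the construction starting not from $f^{r-1}_{p(z)}$ but from the minimal function at level $r-2$ of a hyperplane section, or equivalently apply Theorem~\ref{th:construction} to a $u$ whose regularity is $r-2$. So the real argument is: set $q(z):=p(z)$, consider $\bar\varrho_{p(z)}$, and observe that $F(p(z),r-1)$ is exactly the regularity just below the Gotzmann ceiling; by Theorem~\ref{th:first derivative} it suffices to show $f^{r-1}_{p(z)}$ has regularity $r-1$, i.e.\ that it does \emph{not} collapse to $f^{\varrho'}_{p(z)}$ with $\varrho'<r-1$. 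That non-collapse is precisely where $r>r_{\Delta p(z)}$ enters: it guarantees that the lex-segment growth for $\Delta p(z)$ stabilizes strictly before degree $r-1$, hence the Macaulay bound $(p(r-1)_{r-1})^+_+=p(r)$ can fail to propagate down past $r-1$, forcing $f^{r-1}_{p(z)}(r-2)>p(r-2)$ is false — rather $f^{r-1}_{p(z)}(r-2)<p(r-2)$, so $\varrho_{f^{r-1}_{p(z)}}=r-1$ exactly. Combining with Theorem~\ref{th:first derivative} applied at $\varrho=r-1$ (legitimate since $r-1>\bar\varrho_{p(z)}$, as $\bar\varrho_{p(z)}=\max\{\varrho_{p(z)},\varrho_{\Delta p(z)}-1\}\leq\max\{r-1,r_{\Delta p(z)}-1\}<r-1$ is false in general — here one uses $r>r_{\Delta p(z)}\ge\varrho_{\Delta p(z)}$ to get $\varrho_{\Delta p(z)}-1\le r-2<r-1$, and $\varrho_{p(z)}\le r_{p(z)}-1=r-1$ with equality impossible for non-constant $p$, hence $\varrho_{p(z)}<r-1$), we conclude $f^{r-1}_{p(z)}\in F(p(z),r-1)$, which is the claim.
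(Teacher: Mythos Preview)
Your write-up meanders through an expanded-lifting attempt before settling on the real strategy: show directly that $\varrho_{f^{r-1}_{p(z)}}=r-1$, then use the last line of Proposition~\ref{prop:minimalfunction} to conclude $f^{r-1}_{p(z)}\in F(p(z),r-1)$. That outline is sound, but you do not actually prove the regularity claim. You need $(p(r-1)_{r-1})^-_-\neq p(r-2)$; the sentence about how $r>r_{\Delta p(z)}$ ``guarantees the lex-segment growth for $\Delta p(z)$ stabilizes strictly before degree $r-1$, hence the Macaulay bound \ldots\ can fail to propagate down'' is not an argument, and along the way you write $(p(r-1)_{r-1})^+_+=p(r)$, which is the \emph{opposite} of what holds (minimality of the Gotzmann number gives $(p(r-1)_{r-1})^+_+\neq p(r)$). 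A correct proof here is essentially Lemma~\ref{lemma:crucial1} at $\bar t=r-1$: from $r-1\geq r_{\Delta p(z)}$ one has $(\Delta p(r-1)_{r-1})^+_+=\Delta p(r)$, so if $(p(r-1)_{r-1})^-_-=p(r-2)$ held, the lemma would force $(p(r-1)_{r-1})^+_+=p(r)$, a contradiction. Your side checks (e.g.\ $r-1>\bar\varrho_{p(z)}$, and ``equality impossible for non-constant $p$'') are likewise asserted rather than established.

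The paper's proof avoids all binomial manipulation. It cites the fact (from Robbiano) that $f^{r-1}_{p(z)}$ \emph{is} the Hilbert function of the unique saturated lex-segment ideal with Hilbert polynomial $p(z)$, so membership in some $F(p(z),\varrho)$ is automatic; since that ideal has regularity $r$, Proposition~\ref{prop: max reg} and Remark~\ref{rem:varie lex} give $r=M_{f^{r-1}_{p(z)}}=\max\{\varrho+1,r_{\Delta p(z)}\}$, and the hypothesis $r>r_{\Delta p(z)}$ then forces $\varrho=r-1$. This is both shorter and independent of Lemma~\ref{lemma:crucial1}, which the paper introduces only afterwards.
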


\begin{proof}
The function $f_{p(z)}^{r-1}$ is exactly the Hilbert function of the unique saturated lex-segment ideal with Hilbert polynomial $p(z)$ (see \cite{Ro}), which has regularity $r$.
If $\varrho:=\varrho_{f^{r-1}_{p(z)}}$ denotes the regularity of $f^{r-1}_{p(z)}$, then $r=M_{f^{r-1}_{p(z)}}=\reg(\lex(\Delta f^{r-1}_{p(z)}))=\max\{\varrho+1,r_{\Delta p(z)}\}=\varrho+1$, because $r>r_{\Delta p(z)}$, from Remark \ref{rem:varie lex} and Proposition \ref{prop: max reg}.
\end{proof}

Next theorem implies a refinement of Theorem~\ref{th:first derivative} and highlights that, for every $\varrho\geq \max\{r_{\Delta p(z)}, \varrho_{p(z)}\}$, the minimal function with regularity $\varrho$ is of type $f^{\varrho}_{p(z)}$. The following technical lemma will be useful.

\begin{lemma}\label{lemma:crucial1}
Let $p(z)$ be a non-constant Hilbert polynomial. If $\bar t$ is a positive integer such that $(p(\bar t)_{\bar t})^-_- = p(\bar t-1)$ and 
$(\Delta p(\bar t)_{\bar t})^+_+ = \Delta p(\bar t+1)$, then $(p(\bar t)_{\bar t})^+_+=p(\bar t+1)$ and $(p(\bar t+1)^-_-)=p(\bar t)$.
\end{lemma}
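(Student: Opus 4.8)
The plan is to work entirely with Macaulay representations and the well-known monotonicity properties of the operators $(-)^+_+$ and $(-)^-_-$. Write $p(z)$ for the polynomial and recall that $\Delta p(z) = p(z) - p(z-1)$, so the four quantities involved are linked by $\Delta p(\bar t) = p(\bar t) - p(\bar t - 1)$ and $\Delta p(\bar t + 1) = p(\bar t + 1) - p(\bar t)$. The key elementary facts I will invoke are: (a) for a positive integer $a$ with $a$-th Macaulay representation in base $t$, one always has $((a_t)^+_+)^-_-$ (taken in base $t+1$, then $t$) return values comparable to $a$, and more precisely the standard ``round-trip'' identities $\bigl(((a_t)^+_+)_{t+1}\bigr)^-_- = a$ when the top binomial is nondegenerate; (b) the monotonicity $a \leq b \Rightarrow (a_t)^+_+ \leq (b_t)^+_+$ and similarly for $(-)^-_-$; and (c) Macaulay's characterization of $O$-sequences. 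These are collected in \cite{BH, Ro} and are exactly the toolkit used throughout \cite{CLMR2015}.

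First I would set up the Macaulay representation $p(\bar t)_{\bar t} = \binom{k(\bar t)}{\bar t} + \binom{k(\bar t - 1)}{\bar t - 1} + \dots + \binom{k(j)}{j}$ and translate the hypothesis $(p(\bar t)_{\bar t})^-_- = p(\bar t - 1)$ into the statement that subtracting $1$ from each ``numerator and denominator'' recovers $p(\bar t - 1)$ exactly; combined with the hypothesis on $\Delta p$, this pins down how the representation of $p(\bar t)$ sits relative to that of $p(\bar t - 1)$. Then I would compute $(p(\bar t)_{\bar t})^+_+$ directly from this representation and compare it to $p(\bar t + 1)$, using the $\Delta$-hypothesis $(\Delta p(\bar t)_{\bar t})^+_+ = \Delta p(\bar t + 1)$ to control the discrepancy $p(\bar t + 1) - p(\bar t)$. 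The cleanest route is probably to argue by the ``telescoping'' principle: since $\Delta p$ grows maximally at step $\bar t$ (that is what $(\Delta p(\bar t)_{\bar t})^+_+ = \Delta p(\bar t+1)$ says) and $p$ itself ``comes from below'' in the maximal way at $\bar t$ (that is what $(p(\bar t)_{\bar t})^-_- = p(\bar t - 1)$ gives, read in reverse), these two maximality conditions force $p$ to also grow maximally at step $\bar t$, i.e. $(p(\bar t)_{\bar t})^+_+ = p(\bar t + 1)$. Once that first conclusion is in hand, the second, $(p(\bar t + 1)_{\bar t + 1})^-_- = p(\bar t)$, should follow by applying the round-trip identity (a) to $a = p(\bar t)$ in base $\bar t$: from $(p(\bar t)_{\bar t})^+_+ = p(\bar t + 1)$ one recovers $p(\bar t)$ by the inverse operation, provided the top binomial coefficient of $p(\bar t)$ in base $\bar t$ is nondegenerate, which is guaranteed because $p(z)$ is non-constant (so $p(\bar t)$ is genuinely large for the relevant range of $\bar t$, forcing $k(\bar t) \geq \bar t$).

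The main obstacle I anticipate is bookkeeping the degenerate cases in the Macaulay representation — specifically, making sure the index $j$ in the expansion and the condition $k(j) \geq j$ behave well, so that the round-trip identities hold without an off-by-one loss. In particular the operators $(-)^+_+$ and $(-)^-_-$ are only exact inverses when no binomial coefficient drops to $0$ or crosses the nondegeneracy threshold, and I would need to verify, using that $p(z)$ is non-constant together with the hypothesis $(p(\bar t)_{\bar t})^-_- = p(\bar t - 1)$ (which already forbids a degenerate bottom term, since otherwise $p(\bar t - 1)$ would be strictly smaller than the round-trip value), that we are in the good regime. A secondary subtlety is that $\Delta p(\bar t)$ and $p(\bar t)$ have different Macaulay representations, so one cannot directly ``add'' the $\Delta$-identity to anything; the argument has to go through the growth interpretation rather than formal manipulation of the expansions. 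I would expect the whole argument to fit in under a page once the right elementary lemmas from \cite{Ro} are cited.
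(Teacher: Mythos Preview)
Your initial setup is exactly right and in fact contains the paper's whole argument, but you then talk yourself out of it. The sentence ``$\Delta p(\bar t)$ and $p(\bar t)$ have different Macaulay representations, so one cannot directly `add' the $\Delta$-identity to anything'' is the misconception. Under the hypothesis $(p(\bar t)_{\bar t})^-_- = p(\bar t-1)$, Pascal's rule $\binom{k}{i} - \binom{k-1}{i-1} = \binom{k-1}{i}$ gives
\[
\Delta p(\bar t) \;=\; p(\bar t) - p(\bar t-1) \;=\; \sum_{i=\bar j}^{\bar t}\binom{k(i)-1}{i},
\]
and this \emph{is} the Macaulay expansion of $\Delta p(\bar t)$ in base $\bar t$ (the inequalities $k(i)-1 > k(i-1)-1$ are inherited). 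Hence $(\Delta p(\bar t)_{\bar t})^+_+ = \sum_i \binom{k(i)}{i+1}$, and the second hypothesis says this equals $\Delta p(\bar t+1)$. Now you \emph{can} add: by Pascal again,
\[
p(\bar t+1) \;=\; \Delta p(\bar t+1) + p(\bar t) \;=\; \sum_i\binom{k(i)}{i+1} + \sum_i\binom{k(i)}{i} \;=\; \sum_i\binom{k(i)+1}{i+1} \;=\; (p(\bar t)_{\bar t})^+_+.
\]
That is the first conclusion; the second follows because the last display is already the Macaulay expansion of $p(\bar t+1)$ in base $\bar t+1$, so applying $(-)^-_-$ returns $p(\bar t)$ on the nose, with no round-trip subtleties to check.

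So the ``growth interpretation'' detour is unnecessary and, as you suspected, would only give inequalities. The direct formal manipulation you dismissed is precisely what works, and the whole proof is five lines of Pascal's identity.
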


\begin{proof} Consider the two binomial expansions 
$p(\bar t)_{\bar t} = \binom{k(\bar t)}{\bar t}+\binom{k(\bar t-1)}{\bar t-1}+\dots+\binom{k(\bar j)}{\bar j}$ and $p(\bar t+1)_{\bar t+1} = \binom{h(\bar t+1)}{\bar t+1}+\binom{h(\bar t)}{\bar t}+\dots+\binom{h(j)}{j}$. From the hypothesis on $p(\bar t-1)$ we have:
\begin{center}
$0\not=\Delta p(\bar t)= \binom{k(\bar t)-1}{\bar t}+\binom{k(\bar t-1)-1}{\bar t-1}+\dots+\binom{k(\bar j)-1}{\bar j}$,
\end{center}
which gives the binomial expansion of $\Delta p(\bar t)$ in base $\bar t$ (see \eqref{espansione}). Thus
\begin{center}
$(\Delta p(\bar t)_{\bar t})^+_+= \binom{k(\bar t)}{\bar t+1}+\binom{k(\bar t-1)}{\bar t}+\dots+\binom{k(\bar j)}{\bar j+1}$
\end{center}
and if $(\Delta p(\bar t)_{\bar t})^+_+ = \Delta p(\bar t+1)$ then from $p(\bar t+1)=\Delta p(\bar t+1)+p(\bar t)$ we obtain
\begin{center}
$(p(\bar t)_{\bar t})^+_+=\binom{k(\bar t)+1}{\bar t+1}+\binom{k(\bar t-1)+1}{\bar t}+\dots+\binom{k(\bar j)+1}{\bar j+1}=
\binom{h(\bar t+1)}{\bar t+1}+\binom{h(\bar t)}{\bar t}+\dots+\binom{h(j)}{j}=p(\bar t+1)$
\end{center}
which are both the binomial expansion of a same integer in base $\bar t+1$, hence $j=\bar j+1$ and $h(i+1) = k(i)+1$, for every $\bar j \leq i \leq \bar t$. So, we also obtain $(p(\bar t+1)_{\bar t+1})^-_- = p(\bar t)$.
\end{proof}

\begin{theorem}\label{th:new}
Let $p(z)$ an admissible polynomial of positive degree.
For every integer $\varrho$ such that $r_{p(z)}>\varrho \geq \max\{r_{\Delta p(z)}, \varrho_{p(z)}\}$,  
\begin{itemize}
\item[(i)] the minimal Hilbert function $f^{\varrho}_{p(z)}$ has regularity $\varrho$;
\item[(ii)] 
$F(p(z),\varrho)$ is non-empty and every scheme with Hilbert function $u\in F(p(z),\varrho)$ has Castelnuovo-Mumford regularity $\varrho+1$. 
\end{itemize}
\end{theorem}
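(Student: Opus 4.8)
The plan is to prove (i) and (ii) together by induction on the degree $k$ of $p(z)$, exploiting the close interaction between the minimal function $f^\varrho_{p(z)}$, its first derivative, and the minimal function $f^\varrho_{\Delta p(z)}$ of the hyperplane section. First I would record the base case $k=1$: here $\Delta p(z)$ is a positive constant $d$, so $r_{\Delta p(z)}=d$, and the hypothesis $\varrho\ge r_{\Delta p(z)}=d$ forces $\Delta p(z)$ to be already constant on the relevant range; one checks directly from Definition~\ref{def:minimal functions} that $f^\varrho_{p(z)}$ has regularity exactly $\varrho$ (using that $\varrho\ge\varrho_{p(z)}$ and $\varrho<r_{p(z)}$ to see it is neither larger nor smaller), and that every scheme with Hilbert function in $F(p(z),\varrho)$ has a one-dimensional general hyperplane section which is a finite set of $d$ points, hence is Cohen--Macaulay, giving $\reg(X)=\varrho_X+1=\varrho+1$ by Lemma~\ref{lemma: ex CMR newreg}(ii) as in Remark~\ref{rem:varrho'}.

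For the inductive step, the key observation is that $\Delta f^\varrho_{p(z)}$ should coincide with $f^\varrho_{\Delta p(z)}$ on the range where the latter is defined and meaningful. Since $\varrho\ge r_{\Delta p(z)}\ge r_{\Delta^2 p(z)}$ and $\varrho\ge\varrho_{p(z)}\ge\varrho_{\Delta p(z)}$ (the monotonicity of these invariants under taking derivatives is recalled in the preliminaries and in Remark~\ref{rem:varie lex}), the pair $\Delta p(z), \Delta^2 p(z)$ satisfies the hypotheses of the theorem in degree $k-1$ unless $r_{\Delta p(z)}=r_{\Delta^2 p(z)}$, in which case $\Delta p(z)$ is handled separately (its lex Hilbert function already attains the polynomial early). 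The technical heart is Lemma~\ref{lemma:crucial1}: at each $\bar t<\varrho$ one knows $(f^\varrho_{p(z)}(\bar t+1)_{\bar t+1})^-_-=f^\varrho_{p(z)}(\bar t)$ by construction, and one feeds in the companion identity for $\Delta p(z)$ coming from $f^\varrho_{\Delta p(z)}$ having regularity $\varrho$ (the inductive hypothesis for (i) in degree $k-1$, together with Proposition~\ref{prop:CMS e altro} and Proposition~\ref{prop:minimalfunction} to pin down the regularity), to conclude that $f^\varrho_{p(z)}$ propagates down exactly as the lex Hilbert function does and in particular has regularity $\varrho$, not less. This proves (i).

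For (ii), once (i) holds, $f^\varrho_{p(z)}\in F(p(z),\varrho)$ follows: Theorem~\ref{th:construction} applied with $Z$ a scheme realizing the hyperplane-section Hilbert function $\Delta f^\varrho_{p(z)}=f^\varrho_{\Delta p(z)}$ (which lies in $F(\Delta p(z),\varrho)$ by induction) produces a scheme $X$ with $H_X=f^\varrho_{p(z)}$ and $\reg(X)=\max\{\reg(Z),\varrho+1\}$; since $\reg(Z)=\varrho$ by the inductive hypothesis, this is $\varrho+1$. For an arbitrary $u\in F(p(z),\varrho)$, minimality of $f^\varrho_{p(z)}$ from Proposition~\ref{prop:minimalfunction}(1) gives $f^\varrho_{p(z)}\preceq u$, hence $f^\varrho_{\Delta p(z)}=\Delta f^\varrho_{p(z)}\preceq\Delta u$, so $\tilde\varrho_u\le\varrho$ in the notation of Theorem~\ref{th:minreg}; combined with $\reg(X)\ge\varrho_X+1=\varrho+1$ and Lemma~\ref{lemma: ex CMR newreg}(ii) together with the inductive hypothesis (ii) for $\Delta p(z)$ applied to the general hyperplane section $Z$ of $X$ (whose Hilbert function $H_Z$ satisfies $H_Z(t)=\Delta u(t)$ for $t\gg0$ and has regularity $\varrho$), one gets $\reg(X)=\max\{\reg(Z),\varrho_X+1\}=\varrho+1$. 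The main obstacle I anticipate is the bookkeeping in the degenerate case $r_{\Delta p(z)}=r_{\Delta^2 p(z)}$, where the inductive hypothesis does not literally apply and one must instead invoke Remark~\ref{rem:varie lex} directly to see that $f^\varrho_{\Delta p(z)}$ still has regularity $\varrho$ and lies in $F(\Delta p(z),\varrho)$; making the two lemmas mesh cleanly across this boundary, rather than any single estimate, is where care is needed.
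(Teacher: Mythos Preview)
Your inductive framework has a structural problem: to invoke the inductive hypothesis for (i) on $\Delta p(z)$ at level $\varrho$ you would need $r_{\Delta p(z)}>\varrho$, but the standing hypothesis of the theorem is precisely $\varrho\ge r_{\Delta p(z)}$. So the statement ``$f^\varrho_{\Delta p(z)}$ has regularity $\varrho$ by the inductive hypothesis'' never applies; in fact for $\varrho\ge r_{\Delta p(z)}$ one rather expects $f^\varrho_{\Delta p(z)}$ to have regularity strictly less than $\varrho$ (compare Remark~\ref{rem:varrho'}). The companion identity $\Delta f^\varrho_{p(z)}=f^\varrho_{\Delta p(z)}$ that you want to feed into Lemma~\ref{lemma:crucial1} is likewise not available from the paper's preliminaries and would have to be proved from scratch; nothing in Proposition~\ref{prop:minimalfunction} asserts it. The same difficulty recurs in your treatment of (ii): the general hyperplane section $Z$ of $X$ has Hilbert function $H_Z\preceq\Delta u$, but its regularity is not known to be $\varrho$, and even if it were, the range $r_{\Delta p(z)}>\varrho$ needed to apply the degree-$(k-1)$ instance of (ii) is again violated.

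The paper's proof avoids all of this by arguing directly, without induction. For (i) it uses the contrapositive: if $f^\varrho_{p(z)}$ had regularity $<\varrho$, then $(p(\varrho)_\varrho)^-_-=p(\varrho-1)$; following this identity upward until it first fails (which must happen before $r_{p(z)}$ by Proposition~\ref{prop:CMS e altro}) and applying the contrapositive of Lemma~\ref{lemma:crucial1} at that point forces $\varrho<r_{\Delta p(z)}$, contradicting the hypothesis. For (ii) the argument is a one-line squeeze: once (i) gives $F(p(z),\varrho)\neq\emptyset$, any $u\in F(p(z),\varrho)$ satisfies $\varrho+1\le m_u\le M_u=\reg(\lex(\Delta u))=\max\{\varrho+1,r_{\Delta p(z)}\}=\varrho+1$ by Proposition~\ref{prop: max reg} and Remark~\ref{rem:varie lex}, since $\varrho\ge r_{\Delta p(z)}$. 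No hyperplane section, no Theorem~\ref{th:construction}, and no induction are needed for (ii).
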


\begin{proof}
First, observe that $\bar\varrho_{p(z)}\leq \max\{r_{\Delta p(z)}, \varrho_{p(z)}\} \leq \varrho$, thanks to \eqref{eq:barrho}.

For what concerns item (i), if $\varrho=r_{p(z)}-1$ then the thesis follows from Proposition \ref{prop:CMS e altro}. Otherwise, I show that, if the minimal function $f^{\varrho}_{p(z)}$ has regularity $< \varrho$, then $\varrho < r_{\Delta p(z)}$. Let $r:=r_{p(z)}$.

If $f^{\varrho}_{p(z)}$ has regularity $< \varrho$, then $(p(\varrho)_\varrho)^-_- = p(\varrho-1)$ thanks to Definition \ref{def:minimal functions}. From Proposition \ref{prop:CMS e altro}, the inequality $p(r) \not= (p(r-1)_{r-1})^+_+$ holds. Thus, there exists a positive integer $\bar t\geq \varrho$ such that $(p(\bar t)_{\bar t})^-_- = p(\bar t-1)$ and $(p(\bar t+1)_{\bar t+1})^-_- \not= p(\bar t)$. So, from Lemma \ref{lemma:crucial1} it follows that $(\Delta p(\bar t)_{\bar t})^+_+ \not= \Delta p(\bar t+1)$, hence $\varrho\leq \bar t < r_{\Delta p(z)}$ by Remark \ref{rem:varie lex}. 

For what concerns item (ii), $F(p(z),\varrho)$ is non-empty thanks to item~(i). For the second assertion, note that, from Lemma \ref{lemma: ex CMR newreg}(ii), Proposition~\ref{prop: max reg} and Remark \ref{rem:varie lex}, we have  
$\varrho+1 \leq m_u \leq M_u=\reg(\lex(\Delta u))=\max\{\varrho+1, r_{\Delta p(z)}\}$. Thus, we can conclude because $\varrho\geq r_{\Delta p(z)}$. 
\end{proof}

\begin{example}
For the Hilbert polynomial $p(z)=5z-3$, the Gotzmann number is $r=7$, $\varrho_{p(z)}=\bar\varrho_{p(z)}=3$ and $\bar\varrho_{\Delta p(z)}=1$, $r_{\Delta p(z)}=5$ \cite[Example~4.4(3)]{CLMR2015}. The minimal possible Castelnuovo-Mumford regularity of a scheme with Hilbert polynomial $p(z)$ is $m_{p(z)}=5$ thanks to \cite[Theorem 8.1]{CLMR2015}. Moreover, $f^4_{p(z)}=f^3_{p(z)}$ and $f^5_{p(z)}\not= f^4_{p(z)}$ because $(p(4)_4)^-_-=p(3)$ but $(p(5)_5)^-_- < p(4)$, in fact $6=(\Delta p(4)_4)^+_+ > \Delta p(5)=5$. In terms of Lemma \ref{lemma:crucial1}, we have $\bar t=4 < r_{\Delta p(z)}$.
\end{example}

All the results that have been obtained until now finally converge in the proof of the following main theorem.

\begin{theorem}\label{th:main result} {\rm(Main Result)}
Let $u$ belong to $F(p(z),\varrho)$. There is a scheme $X$ with Hilbert function $u$ and $\reg(X)=m$ if and only if $m_u\leq m \leq \reg(\lex(\Delta u))$.
\end{theorem}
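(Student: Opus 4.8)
The plan is to prove the equivalence by establishing that $\{\reg(X):H_X=u\}$ equals the integer interval $[m_u,M_u]$. Necessity is immediate: $\reg(X)\geq m_u$ and $\reg(X)\leq M_u=\reg(\lex(\Delta u))$ by the very definitions of $m_u$ and $M_u$ together with Proposition~\ref{prop: max reg}. The content is sufficiency, which I would prove by induction on the degree $k$ of $p(z)$. Throughout I will freely use the identity $M_u=\max\{\varrho+1,r_{\Delta p(z)}\}$ (Remark~\ref{rem:varie lex}, Proposition~\ref{prop: max reg}), the formula $m_u=\max\{m_{\tilde f},\varrho+1\}$ of Theorem~\ref{th:minreg} with $\tilde f:=f^{\tilde\varrho_u}_{\Delta p(z)}\in F(\Delta p(z),\tilde\varrho_u)$ and $\tilde f\preceq\Delta u$, and, above all, the expanded lifting (Theorem~\ref{th:construction}), by which it suffices, for a given $m$, to exhibit a scheme $Z$ with $p_Z=\Delta p(z)$, $H_Z\preceq\Delta u$ and $\max\{\reg(Z),\varrho+1\}=m$. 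In the base case $k=0$ one has $m_u=\varrho+1$ by Theorem~\ref{th:minreg} and $M_u=\reg(\lex(\Delta u))=\varrho+1$: indeed $\lex(\Delta u)$ is strongly stable with Artinian quotient of top degree $\varrho$, so an easy check shows it has a top-degree minimal generator in degree $\varrho+1$, whence $\reg(\lex(\Delta u))=\varrho+1$ by Remark~\ref{rem:zero-generic}. Thus $[m_u,M_u]=\{\varrho+1\}$, realized since $u\in F(p(z),\varrho)$.

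For the inductive step $k\geq1$, fix $m$ with $m_u\leq m\leq M_u$ and split into three ranges. If $m=\varrho+1$, then $m\geq m_u$ forces $m_{\tilde f}\leq\varrho+1$, so any scheme $Z$ with $H_Z=\tilde f$ realizing $\reg(Z)=m_{\tilde f}$ works. If $\varrho+1<m\leq M_{\tilde f}$, then (since $m\geq m_u\geq m_{\tilde f}$) the inductive hypothesis applied to $\tilde f$, whose Hilbert polynomial $\Delta p(z)$ has degree $k-1$, gives a scheme $Z$ with $H_Z=\tilde f$ and $\reg(Z)=m$, whence $\reg(X)=m$ as $m>\varrho+1$. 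If $M_{\tilde f}<m\leq M_u=r_{\Delta p(z)}$, I would take $Z$ with $H_Z=f^{m-1}_{\Delta p(z)}$: one first checks $f^{m-1}_{\Delta p(z)}\preceq\tilde f\preceq\Delta u$ from $m-1\geq M_{\tilde f}=\reg(\lex(\Delta\tilde f))\geq\varrho_{\Delta\tilde f}=\varrho_{\tilde f}+1=\tilde\varrho_u+1>\tilde\varrho_u$, the identity $\varrho_{\tilde f}=\tilde\varrho_u$ following from Proposition~\ref{prop:minimalfunction}(1), Remark~\ref{rem:varrho'} and the minimality of $\tilde\varrho_u$; one then checks $r_{\Delta p(z)}>m-1\geq\max\{r_{\Delta^2 p(z)},\varrho_{\Delta p(z)}\}$, so that Theorem~\ref{th:new} (or Remark~\ref{rem:varrho'} when $\Delta p(z)$ is constant) gives $f^{m-1}_{\Delta p(z)}\in F(\Delta p(z),m-1)$ with every scheme of that Hilbert function having Castelnuovo--Mumford regularity $m$. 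In each range Theorem~\ref{th:construction} then produces the desired $X$ with $H_X=u$ and $\reg(X)=m$.

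The main obstacle is the numerical bookkeeping showing that the three ranges together cover all of $[m_u,M_u]$; the crux is the inequality $\max\{M_{\tilde f},\varrho+1\}\geq\max\{r_{\Delta^2 p(z)},\varrho_{\Delta p(z)},\tilde\varrho_u,\varrho+1\}$, which I expect to deduce from $M_{\tilde f}=\max\{\varrho_{\tilde f}+1,r_{\Delta^2 p(z)}\}$, from $\varrho_{\tilde f}=\tilde\varrho_u\geq\bar\varrho_{\Delta p(z)}\geq\varrho_{\Delta p(z)}$, and from $\varrho+1=\varrho_{\Delta u}$ (which also keeps $M_{\tilde f}\leq M_u$ in the non-degenerate case). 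Extra care is needed for the degenerate situations: when $k=0$ or $\Delta p(z)$ is constant, Theorem~\ref{th:new} is unavailable and is replaced by the Cohen--Macaulay computation of Remark~\ref{rem:varrho'}; and when $\varrho+1\geq r_{\Delta p(z)}$ one has $M_u=\varrho+1=m_u$, so only the first range occurs. The rest should be routine given Theorems~\ref{th:construction}, \ref{th:new}, \ref{th:minreg} and the minimal-function calculus of Section~\ref{sec:minimalFunctions}.
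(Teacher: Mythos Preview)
Your proposal is correct and follows essentially the same approach as the paper's proof: induction on $k=\deg p(z)$, the base case via the Cohen--Macaulay property of zero-dimensional schemes, and for $k\geq 1$ the reduction via the expanded lifting (Theorem~\ref{th:construction}) to producing an appropriate $Z$ with $H_Z\preceq\Delta u$, using the inductive hypothesis on $\tilde f=f^{\tilde\varrho_u}_{\Delta p(z)}$ for the lower part of the interval and Theorem~\ref{th:new} (or Remark~\ref{rem:varrho'} when $\Delta p(z)$ is constant) for the upper part. The only differences are organizational: you split $[m_u,M_u]$ into three ranges (with $m=\varrho+1$ singled out) and treat $k=1$ and $k>1$ uniformly, whereas the paper uses the two ranges $[m_u,r_{\Delta^2 p(z)}]$ and $(\max\{m_u,r_{\Delta^2 p(z)}\},r_{\Delta p(z)}]$ and handles $k=1$ separately; neither choice changes the substance of the argument.
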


\begin{proof}
Let $k$ be the degree of $p(z)$.

Recall that if a scheme $X$ of dimension $k$ is Cohen-Macaulay then $\reg(X)=\varrho_X+k+1$, and a zero-dimensional scheme is always Cohen-Macaulay. So, if $k=0$, then $\reg(X)=\varrho_u+1=m_u=M_u=\reg(\lex(\Delta u))$ by Proposition \ref{prop: max reg}. 

Let $k>0$. 
If $r_{\Delta p(z)} < \reg(\lex(\Delta u))$, then $M_u=\reg(\lex(\Delta u))=\varrho_{\Delta u}=\varrho+1=m_u$ thanks to Proposition \ref{prop: max reg} and Remark \ref{rem:varie lex}.
So, we can focus on the case $r_{\Delta p(z)} = \reg(\lex(\Delta u))$ and  $m_u < \reg(\lex(\Delta u))= r_{\Delta p(z)}$. If $k=1$ and $p(z)=dz+c$, then for every integer $\varrho'$ such that 
$$\varrho+1\leq m_u\leq \varrho' < \reg(\lex(\Delta u))=r_d=d,$$ 
the minimal function $f^{\varrho'}_d$ has regularity $\varrho'$ and there exists a zero-dimensional scheme $Z$ with Hilbert function $f^{\varrho'}_d$ and $\reg(Z)=\varrho'+1$ (see Remark \ref{rem:varrho'}). Moreover, $f^{\varrho'}_d\preceq \Delta u$ due to Proposition \ref{prop:minimalfunction}(1). Then, from Theorem \ref{th:construction} we obtain a scheme $X$ with Hilbert function $u$ and $\reg(X)=\reg(Z)=\varrho'+1$. 

Assume now $k>1$ and, referring to Theorem \ref{th:minreg}, consider $\tilde\varrho_u :=\min\{t\geq \bar\varrho_{\Delta p(z)} : f^t_{\Delta p(z)}\preceq \Delta u\}$. By the definition of $\tilde\varrho_u$, the Hilbert function $\tilde f:=f^{\tilde\varrho_u}_{\Delta p(z)}$ belongs to $F(\Delta p(z),\tilde\varrho_u)$. Moreover, from Theorem~\ref{th:minreg}, we have $m_u=\max\{m_{\tilde f}, \varrho+1\}$.

If $m_u< \reg(\lex(\Delta \tilde f))$, then $\reg(\lex(\Delta \tilde f))=\max\{\tilde\varrho_u+1,r_{\Delta^2 p(z)}\}=r_{\Delta^2 p(z)}$ because  $\tilde\varrho_u+1 \leq m_{\tilde f}\leq m_u$. Then, by the inductive hypothesis, for every integer $m$ such that 
$$m_{\tilde f} \leq m_u \leq m \leq \reg(\lex(\Delta \tilde f))=r_{\Delta^2 p(z)},$$ 
there exists a scheme $Z$ with Hilbert function $\tilde f$ and $\reg(Z)=m$.
Hence, by Theorem~\ref{th:construction}, there exists a scheme $X$ with Hilbert function $u$ and $\reg(X)=m$, because $\tilde f \preceq \Delta u$ by definition. 
If $r_{\Delta^2 p(z)}=r_{\Delta p(z)}$ then we can conclude here.
Otherwise, we now consider $m$ such that 
$$\max\{m_u,r_{\Delta^2 p(z)}\} < m \leq r_{\Delta p(z)}.$$
In this range Theorem \ref{th:new}(i) can be applied because $m_u\geq m_{\tilde f}>\bar\varrho_{\Delta p(z)}\geq \varrho_{\Delta p(z)}$. Then, the Hilbert function $f^{m-1}_{\Delta p(z)}$ has regularity ${m-1}$ and, hence, there exists a scheme $Z$ with $\reg(Z)=m$ and Hilbert function $f^{m-1}_{\Delta p(z)} \preceq f^\varrho_{\Delta p(z)}\preceq \Delta u$. Due to Theorem \ref{th:construction}, we again have a scheme $X$ with Hilbert function $u$ and $\reg(X)=m$.
\end{proof}

\begin{corollary}\label{cor:main cor} {\rm (Algebraic Case)}
Let $f$ be the Hilbert function of a finitely generated graded $K$-algebra $K[x_1,\dots,x_n]/I$. 
\begin{itemize}
\item[(i)] $m_{\sum f}$ and $\reg(\lex(f))$ are, respectively, the minimal and the maximal possible regularities for the polynomial ideal $I$. 
\item[(ii)] For every integer $a$ such that $m_{\sum f}\leq a \leq \reg(\lex(f))$, there is a {\em strongly stable ideal} $J$ with $\reg(J)=a$ and $f$ the Hilbert function of $K[x_1,\dots,x_n]/J$.
\end{itemize}
\end{corollary}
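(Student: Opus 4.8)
The plan is to reduce both items to Theorem~\ref{th:main result} applied to the Hilbert function $u=\Sigma f$, through the equivalence: \emph{there is a homogeneous ideal $I\subset K[x_1,\dots,x_n]$ with $H_{K[x_1,\dots,x_n]/I}=f$ and $\reg(I)=a$ if and only if there is a scheme $X$ with $H_X=\Sigma f$ and $\reg(X)=a$, and in that case $I$ may be taken strongly stable.} Here $\Sigma f$ is the Hilbert function of a scheme because $\Delta(\Sigma f)=f$ is an O-sequence and, by \cite[Corollary~3.4]{GMR}, a numerical function is the Hilbert function of a scheme precisely when its first derivative is an O-sequence; moreover $\reg(\lex(f))=\reg(\lex(\Delta(\Sigma f)))=M_{\Sigma f}$ by Proposition~\ref{prop: max reg}, and $m_{\Sigma f}$ is, by definition, the least regularity of a scheme with Hilbert function $\Sigma f$. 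Granting the equivalence, Theorem~\ref{th:main result} shows that the integers $a$ of the form $\reg(I)$ for an ideal $I$ with $H_{K[x_1,\dots,x_n]/I}=f$ are exactly those with $m_{\Sigma f}\le a\le \reg(\lex(f))$; this is (i), and the ``strongly stable'' refinement, read off for each such $a$, is (ii).

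To establish the equivalence I would first set up the dictionary between $K[x_1,\dots,x_n]$ and $S=K[x_0,\dots,x_n]$ for strongly stable ideals. If $\bar I\subset K[x_1,\dots,x_n]$ is strongly stable, then $\bar I\cdot S$ is strongly stable (the new smallest variable $x_0$ is never produced by replacing a variable with a larger one), it is saturated since $x_0$ is a nonzerodivisor on $S/(\bar I\cdot S)=(K[x_1,\dots,x_n]/\bar I)[x_0]$ so that $\mathfrak m\notin\mathrm{Ass}$, its Hilbert function is $\Sigma\bigl(H_{K[x_1,\dots,x_n]/\bar I}\bigr)$, and $\reg(\bar I\cdot S)=\reg(\bar I)$ because adjoining a variable leaves the minimal monomial generators unchanged (Remark~\ref{rem:zero-generic}). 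Conversely, a saturated strongly stable ideal $L\subset S$ has no minimal generator divisible by $x_0$: were $x_0v$ such a generator, strong stability would force $x_iv\in L$ for every $i\ge 1$ while $v\notin L$, so that $\mathfrak m\in\mathrm{Ass}(S/L)$; hence $L=\bar L\cdot S$ with $\bar L:=L\cap K[x_1,\dots,x_n]$ strongly stable, $H_{K[x_1,\dots,x_n]/\bar L}=\Delta(H_{S/L})$ and $\reg(\bar L)=\reg(L)$.

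With this in hand, if $I\subset K[x_1,\dots,x_n]$ has $H_{K[x_1,\dots,x_n]/I}=f$ and $\reg(I)=a$, I would replace $I$ by its degrevlex zero-generic initial ideal --- strongly stable, with the same Hilbert function and regularity by Remark~\ref{rem:zero-generic} --- and extend it to $S$, obtaining a saturated strongly stable ideal and hence a scheme $X$ with $H_X=\Sigma f$ and $\reg(X)=a$. Conversely, starting from such a scheme $X$ I would pass to the degrevlex zero-generic initial ideal of $I(X)$, which is strongly stable, still saturated, and has Hilbert function $\Sigma f$ and regularity $a$ (Remark~\ref{rem:zero-generic}), and then restrict it to $K[x_1,\dots,x_n]$ as above to get a strongly stable $J$ with $H_{K[x_1,\dots,x_n]/J}=\Delta(\Sigma f)=f$ and $\reg(J)=a$. (Alternatively, for this last implication one may cut by a general linear form $h$: by Lemma~\ref{lemma: ex CMR newreg}(i), the ideal $(I(X),h)/(h)\subset K[x_1,\dots,x_n]$ has Hilbert function $\Delta H_X=f$ and regularity $\reg((I(X),h))=\reg(X)=a$, the regularity of a module over $S/(h)$ being the same whether computed over $S$ or over $S/(h)$, and one then passes to a zero-generic initial ideal.)

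The step I expect to need the most care is this ring-changing dictionary --- in particular that a strongly stable ideal of $K[x_1,\dots,x_n]$ generates a \emph{saturated} ideal of $S$, that a saturated strongly stable ideal of $S$ has no minimal generator in the last variable, and that the degrevlex zero-generic initial ideal inherits saturatedness. Once these are granted, the remainder is routine: tracking that the Hilbert function transforms by $\Sigma$ (equivalently $\Delta$) and that regularity is unaffected by adjoining a variable, by restricting to a general hyperplane, or by taking a (zero-)generic initial ideal.
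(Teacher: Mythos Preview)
Your proposal is correct and follows the same route as the paper: reduce to Theorem~\ref{th:main result} via the equivalence between ideals in $K[x_1,\dots,x_n]$ with Hilbert function $f$ and schemes with Hilbert function $\Sigma f$, using the degrevlex zero-generic initial ideal and the restriction $(N,x_0)/(x_0)$ to pass back. The paper's own argument is much terser and leaves the ring-changing dictionary (extension to $S$, saturatedness, preservation of regularity) implicit, invoking only Lemma~\ref{lemma: ex CMR newreg}(i); your version spells out precisely those details.
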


\begin{proof}
It is enough to observe that, given an O-sequence $f$ with regularity $\varrho$, there exists a polynomial ideal $I$ with Hilbert function $f$ and $\reg(I)=a$ if and only there exists a scheme $X$ with Hilbert function $\sum f$ and $reg(X)=a$, that is $m_{\sum f}\leq a \leq \reg(\lex(f))$. Indeed, $u:=\sum f$ is the Hilbert function of a scheme and then one can apply Theorem \ref{th:main result} and Lemma \ref{lemma: ex CMR newreg}(i). Finally, denoting by $N\subset S$ the (saturated) zero-generic initial ideal of $I(X)$ w.r.t.~degrevlex order, we can take the strongly stable ideal $J:=(N,x_0)/(x_0)\subset K[x_1,\dots,x_n]$.
\end{proof}

\begin{example}
The O-sequence $f=(1,3,5,4,5,4)$ has Hilbert polynomial $p(z)=4$ and $\varrho_u=5 > r_{p(z)}$. The integral of $f$ is the function $\sum f=(1,4,10,14,4z+3)$ in  $ F(4z+3,4)$ and $m_{\sum f}=5$. On the other hand, $M_u=\reg(\lex(f))=5=m_u$, so every homogeneous polynomial ideal with Hilbert function $f$ has regularity $5$. Moreover, every scheme $X$ with Hilbert function $\sum f$ has $\reg(X)=5$.
\end{example}

\begin{example}
For the O-sequence $u=(1,4,10,20,35,55,80,28z-90)$ we obtain $m_u=9=\varrho_u+2$ and $\reg(\lex(\Delta(u))=28$. From Remark~\ref{rem:varrho'}, for every $9 < m < 28$, there is a scheme $Z$ with Hilbert function $f^{m-1}_{28}\preceq \Delta(u)$ and $\reg(Z)=m$. By Theorem \ref{th:construction} there is also a scheme $X$ with Hilbert function $u$ and $\reg(X)=\max\{\varrho_u+1,m\}=m$.

Analogously, if we consider $f=\Delta u=(1,3,6,10,15,20,25,26,28)$, for every $m\in [m_{\sum f}, \reg(\lex(f))]=[9,28]$, there exists a (homogeneous) polynomial ideal $I$ with Hilbert function $f$ and $\reg(I)=m$. 
\end{example}

\begin{example}
Take the Hilbert function $u=(1,5,2z^2+3z+1)$ of the Veronese surface $X\subseteq \mathbb P^4_k$, which has Castelnuovo-Mumford regularity $\reg(X)=3=m_u$. We have $\Delta u=(1,4,10,4z+1)$ and $M_u=\reg(\lex(\Delta u))=r_{4z+1}=7$. Moreover, $f^3_{4z+1}=(1,4,8,4z+1)$ has regularity $3$, $f^4_{4z+1}=(1,4,8,12,4z+1)$ has regularity $4$ and $f^5_{4z+1}=(1,3,6,10,15,4z+1)$ has regularity $5$, according to Theorem \ref{th:new}(i). 
\end{example}


\section{Explicit construction of a scheme with given Hilbert function and Castelnuovo-Mumford regularity}
\label{sec:explicit construction}

Although the proof of the main result of this paper does not need any explicit construction, in this section I describe a method for identifying and directly computing schemes that satisfy Theorem \ref{th:main result}, by means of the notion of growth-height-lexicographic Borel set, which has been introduced by D. Mall in \cite{Mall1,Mall2}. 

A set $B\subset \mathbb T_t$ is called a {\em Borel set} if, for every term $x^{\alpha}$ in $B$, $x^{\alpha}\cdot \frac{x_j}{x_i}$ belongs to $B$ for every $x_i$ for which $x^\alpha$ is divisible and $x_j\succ x_i$. Hence, a monomial ideal $J$ is strongly stable if $J_t$ is a Borel set, for every $t$.

\begin{remark}
In the setting of this paper, which considers an infinite field of any characteristic, the use of the above expression \lq\lq Borel set" is not entirely correct because the property to be strongly stable is not equivalent to the property to be Borel-fixed. However, with this warning, I choose to adopt the language already used in literature in characteristic zero by the authors whom I refer to.
\end{remark} 

\begin{definition} \cite[Def. 2.7 and 2.13]{Mall1}
Let $B\subset \mathbb T_t$ be a Borel set.\\ 
(i) The set $B^{(i)}=\{x^\alpha\in B : \min(x^\alpha)=x_i\}$ is the {\em growth class} of $B$ of {\em growth} $i$.
The {\em growth-vector} of $B$ is $gv(B):=(\vert B^{(0)}\vert,\vert B^{(1)}\vert\dots,\vert B^{(n)}\vert)$.\\
(ii) For all $i\in \{0,\dots,t\}$, let $B(i):=\{x_0^{\alpha_0} x_1^{\alpha_1}\dots x_n^{\alpha_n}\in B: \alpha_0=i\}$.
The {\em height-vector} of $B$ is the vector $hv(B):=(\vert B(0)\vert,\vert B(1)\vert,\dots,$ $\vert B(t)\vert)$.
\end{definition}

\begin{remark}\label{rem:shape}
Given a Borel set $B\subset\mathbb T_t$, we have:  for every $i\neq j$, $B^{(i)}\cap B^{(j)}=\emptyset$  and  $B{(i)}\cap B{(j)}=\emptyset$; $B(0) =\bigcup_{i\geq 1}B^{(i)}$; $B^{(0)} =\bigcup_{i\geq 1}B{(i)}$.
\end{remark}

\begin{proposition} \cite[Proposition 3.2]{Mall1} \label{prop:height-vector}
Let $B\subset\mathbb T_t$ be a Borel set and $I=(B)^{\sat}\subset S$ the saturation of the ideal generated by $B$. 
If $u$ is the Hilbert function of $S/I$, then:
\begin{itemize}
\item[(i)]\label{it:height-vector_i} $u(j)=\binom{j+n}{n}-\sum_{i=t-j}^t \vert B(i)\vert$, for every $j\leq t$;
\item[(ii)]\label{it:height-vector_ii} $u(j)-u(j-1)=\binom{j+n-1}{n-1}-\vert B(t-j)\vert $  for every $j\leq t$;
\item[(iii)]\label{it:height-vector_iii} $u(t+k)=\binom{t+k+n}{n}-\sum \binom{i+k}{k} \vert B^{(i)}\vert$, for every $k\geq 1$.
\end{itemize} 
\end{proposition}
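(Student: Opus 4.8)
The plan is to reduce all three identities to counting monomials, via $u(j)=\binom{j+n}{n}-|I\cap\mathbb T_j|$. For a term $\tau$ I write $\tau^\flat$ for its $x_0$-free part, so $\tau=x_0^{a}\tau^\flat$ with $a$ the exponent of $x_0$ in $\tau$. Two preliminary facts drive everything. First, $(B)$ is strongly stable, being generated by a Borel set, and is minimally generated by $B$, since distinct terms of the common degree $t$ cannot divide one another. Second, for a strongly stable ideal $\mathfrak m$-saturation coincides with saturation by the least variable $x_0$ — if $x_0^N\tau\in(B)$, then applying $N$ Borel moves $x_0\rightsquigarrow x_i$ to those factors gives $x_i^N\tau\in(B)$ for every $i$ — so $I=(B)^{\sat}=(B):x_0^\infty$; hence a term $\tau$ lies in $I$ if and only if $b^\flat\mid\tau^\flat$ for some $b\in B$.

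For $(i)$ and $(ii)$ I would establish the sharper statement: \emph{for $j\le t$, a term $x^\alpha\in\mathbb T_j$ lies in $I$ if and only if $x_0^{\,t-j}x^\alpha\in B$.} The implication $\Leftarrow$ is clear since $B\subset(B)$. For $\Rightarrow$, pick $b\in B$ with $b^\flat\mid(x^\alpha)^\flat$ and, one by one, raise as many of the $x_0$'s of $b$ as the degree of the quotient term $(x^\alpha)^\flat/b^\flat$, each to a variable occurring in that quotient. A short comparison of exponents shows that $b$ carries exactly that many $x_0$'s precisely because $j\le t$, and that the resulting term — which lies in $B$ because $B$ is Borel — equals $x_0^{\,t-j}x^\alpha$. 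Thus $x^\alpha\mapsto x_0^{\,t-j}x^\alpha$ is a bijection of $I\cap\mathbb T_j$ onto $\{b\in B: x_0^{\,t-j}\mid b\}=\bigcup_{i=t-j}^{t}B(i)$, and since the $B(i)$ are pairwise disjoint (Remark~\ref{rem:shape}) we get $|I\cap\mathbb T_j|=\sum_{i=t-j}^{t}|B(i)|$, which is $(i)$. Subtracting the instance $j-1$ from the instance $j$ and using $\binom{j+n}{n}-\binom{j+n-1}{n}=\binom{j+n-1}{n-1}$ gives $(ii)$.

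For $(iii)$ I would first show that $I$ and $(B)$ contain the same terms in every degree $\geq t$: if $x^\gamma\in I$ has degree $j\geq t$ and $b^\flat\mid(x^\gamma)^\flat$ with $b\in B$, then either $b\mid x^\gamma$ already, or $b$ has strictly more factors $x_0$ than $x^\gamma$ and raising the surplus ones to variables of $(x^\gamma)^\flat/b^\flat$ — legitimate this time exactly because $j\geq t$ — produces a term of $B$ dividing $x^\gamma$; either way $x^\gamma\in(B)$. Hence $|I\cap\mathbb T_{t+k}|=|(B)\cap\mathbb T_{t+k}|$ for all $k\geq 0$, and the right-hand side is given by the Eliahou--Kervaire decomposition, valid for strongly stable ideals over any field: each term of $(B)$ of degree $t+k$ is uniquely $b\cdot w$ with $b\in B$ and $w$ a term of degree $k$ all of whose variables are $\preceq\min(b)$. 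If $\min(b)=x_i$ there are $\binom{i+k}{k}$ such $w$, so summing over the growth classes $B^{(i)}$ yields $|(B)\cap\mathbb T_{t+k}|=\sum_i\binom{i+k}{k}|B^{(i)}|$, i.e. $(iii)$.

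The routine ingredients are the two preliminary facts and the appeal to Eliahou--Kervaire. The only step needing genuine (but elementary) care is the exponent bookkeeping inside the two divisibility lemmas: in each one must check that the chosen $b$ carries precisely enough factors $x_0$ to perform the prescribed sequence of Borel moves, and this is the single point where the hypothesis $j\le t$, respectively $j\geq t$, enters. I expect this to be the main obstacle; the remainder is formal.
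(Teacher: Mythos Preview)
The paper does not prove this proposition; it merely cites it from Mall~\cite{Mall1}. So there is no in-paper proof to compare against, and the task reduces to checking your argument on its own merits.

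Your proof is correct. The two preliminary observations (strong stability of $(B)$ and $I=(B):x_0^\infty$) are standard, and the key bijection $x^\alpha\mapsto x_0^{\,t-j}x^\alpha$ from $I\cap\mathbb T_j$ onto $\bigcup_{i\ge t-j}B(i)$ is exactly what is needed for $(i)$ and $(ii)$. One small wording issue: when you say ``$b$ carries exactly that many $x_0$'s'', what you actually need---and what the exponent comparison gives---is that $b$ carries \emph{at least} $\deg\bigl((x^\alpha)^\flat/b^\flat\bigr)$ factors $x_0$, so that after raising them all the remaining $x_0$-exponent is $t-j+a\ge 0$; this is indeed where $j\le t$ enters, since the required inequality $c\ge(j-a)-(t-c)$ is equivalent to $t+a\ge j$. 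With that clarification the Borel-move argument goes through verbatim. For $(iii)$, the claim that $I$ and $(B)$ agree in degrees $\ge t$ is correct and your surplus-$x_0$ argument establishes it; the final count via the Eliahou--Kervaire unique decomposition of monomials in a strongly stable ideal is standard and valid over any field.
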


\begin{corollary}\label{cor:vectors}
Let $J\subseteq S$ be a saturated strongly stable ideal with $reg(J)=m$. Let $p(z)$ and $u$ be the Hilbert polynomial and the Hilbert function of $S/J$, respectively. Then,
\begin{itemize}
\item[(i)] the height-vector of $J_m$ depends only on $u$ and $m$;
\item[(ii)] the growth-vector of $J_m$ depends only on $p(z)$ and $m$, and is the unique solution of the linear equations $\sum \binom{i+k}{k} x_i = \binom{m+k+n}{n}-u(m+k)$, $0\leq k\leq n$.
\end{itemize}
\end{corollary}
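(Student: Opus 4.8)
The plan is to reduce the whole statement to Proposition~\ref{prop:height-vector}, applied to the Borel set $B:=J_m\subset\mathbb T_m$, once one has identified $(J_m)^{\sat}$ with $J$.

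First I would record this reduction. Because $J$ is strongly stable, $\reg(J)=m$ is the largest degree of a minimal generator of $J$ (Remark~\ref{rem:zero-generic}); hence $J$ has no minimal generator in degree $>m$, so $J_t=S_{t-m}\cdot J_m$ for all $t\geq m$, which means that $J$ and the ideal generated by $J_m$ coincide in every degree $\geq m$. Two homogeneous ideals agreeing in all large degrees share the same saturation, so $(J_m)^{\sat}=J^{\sat}=J$. I would also note that $u(t)=p(t)$ for every $t\geq m$: indeed $\reg(S/J)=\reg(J)-1=m-1$, and the Hilbert function of a graded module agrees with its Hilbert polynomial in degrees above its regularity (see \cite{HoaLecture,RTV2003}). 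Finally, $B:=J_m$ is a Borel set since $J$ is strongly stable, so Proposition~\ref{prop:height-vector} applies with degree parameter $m$, ideal $J$, and Hilbert function $u$.

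For part~(i) I would simply read off Proposition~\ref{prop:height-vector}(i)--(ii): at $j=0$ one gets $\vert B(m)\vert=1-u(0)=0$, and for $1\leq j\leq m$ one gets $\vert B(m-j)\vert=\binom{j+n-1}{n-1}-\bigl(u(j)-u(j-1)\bigr)$. Since on the right-hand sides only $m$, the fixed ambient number $n$, and the values $u(0),\dots,u(m)$ occur, the height-vector $hv(J_m)=(\vert B(0)\vert,\dots,\vert B(m)\vert)$ is determined by $u$ and $m$ alone.

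For part~(ii) I would first observe that the growth classes partition $B$ (Remark~\ref{rem:shape}), so $\sum_{i=0}^{n}\vert B^{(i)}\vert=\vert B\vert=\dim_K J_m=\binom{m+n}{n}-u(m)$; this is the $k=0$ equation of the system, while the equations for $1\leq k\leq n$ are exactly Proposition~\ref{prop:height-vector}(iii) rearranged. Hence $gv(J_m)=(\vert B^{(0)}\vert,\dots,\vert B^{(n)}\vert)$ is a solution of
\[
\sum_{i=0}^{n}\binom{i+k}{k}x_i=\binom{m+k+n}{n}-u(m+k),\qquad 0\leq k\leq n,
\]
and by the reduction step the right-hand sides equal $\binom{m+k+n}{n}-p(m+k)$, so they involve only $p(z)$ and $m$. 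The single step that is not pure bookkeeping is to show that the coefficient matrix $M=\bigl(\binom{i+k}{k}\bigr)_{0\leq i,k\leq n}$ is invertible, which forces the solution to be unique. Here I would use that, for each fixed $k$, the map $i\mapsto\binom{i+k}{k}=\frac{(i+1)(i+2)\cdots(i+k)}{k!}$ is a polynomial in $i$ of degree exactly $k$ with leading coefficient $1/k!$; thus $M=CV$, where $C$ is lower triangular with nonzero diagonal entries $1/k!$ (the change of basis from the monomials $1,i,\dots,i^{n}$ to these polynomials) and $V=(i^{k})_{0\leq i,k\leq n}$ is the Vandermonde matrix at the distinct nodes $0,1,\dots,n$. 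Both factors are invertible, hence so is $M$; therefore the solution of the system is unique, equals $gv(J_m)$, and depends only on $p(z)$ and $m$. I expect this invertibility of $M$ to be the only genuine obstacle, and it is settled by the short linear-algebra argument just sketched.
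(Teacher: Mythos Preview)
Your proof is correct and follows the same approach as the paper, which simply says that both statements follow from Proposition~\ref{prop:height-vector} with $t=m$. You have filled in details the paper omits---the identification $(J_m)^{\sat}=J$, the fact that $u(m+k)=p(m+k)$ for $k\geq 0$, and the invertibility of the coefficient matrix via the Vandermonde factorisation---so your argument is more complete but not different in strategy.
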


\begin{proof}
Both statements follow from Proposition \ref{prop:height-vector} with $t=m$. 
\end{proof}

Let me denote by $hv(u,m)$ and $gv(p(z),m)$ the height-vector and the growth-vector that are uniquely determined in Corollary \ref{cor:vectors} by an integer $m$ and  a Hilbert function $u$ with Hilbert polynomial $p(z)$.


\begin{definition}[{\cite[Definitions 2.10 and 2.14]{Mall1}, \cite[Definition 4.3]{Mall2}}]
\label{DefMall} 
\ 

(1) \label{it:DefMall_i} A Borel set $B\subset\mathbb T_t$ is {\em growth-lexicographic} if all growth classes of $B$ are lex-segments in the corresponding growth classes $\mathbb T_t^{(i)}$ of $\mathbb T_t$.

(2) \label{it:DefMall_ii} A Borel set $B\subset\mathbb T_t$ is {\em growth-height-lexicographic} if $B(0)$ is growth-lexicographic and $B(i)$ is a lex-segment in $\mathbb T_t(i)$ for all $0<i\leq t$. 
\end{definition}

Given a Borel set $B\subset\mathbb T_t$, let $L^{(i)}$ be the set of the $\vert B^{(i)}\vert$ greatest terms $x^\alpha$ with $\min(x^\alpha)=x_i$ w.r.t.~deglex order for every $i=1,\dots,n$, and $L(i)$ the set of the $\vert B(i)\vert$ highest terms w.r.t.~deglex order in $\mathbb T_m(i)=\{x^\alpha : \alpha_0=i\}$, for every $i=1,\dots,t$. Then, consider the set $L:=(\cup_{i=1,\dots,n} L^{(i)}) \bigcup (\cup_{i=1,\dots,t} L(i))$ (see also Remark \ref{rem:shape}).

\begin{theorem}\label{th:crucial} \cite[Theorem 5.7]{CLMR2015}
With the above notation, $L$ is the unique growth-height-lexicographic Borel set in $\mathbb T_t$ with $gv(L(B))=gv(B)$ and $hv(L)=hv(B)$.
Moreover, the ideals $J:=(B)^{\sat}$ and $I:=(L)^{\sat}$ have the same Hilbert function and $\reg(J)\leq \reg(I) \leq t$.
\end{theorem}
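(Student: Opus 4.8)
The plan is to prove the statement in four stages: that $gv(L)=gv(B)$ and $hv(L)=hv(B)$; that $L$ is a Borel set, and is the unique growth-height-lexicographic one with those two vectors; that $(B)^{\sat}$ and $(L)^{\sat}$ share a Hilbert function; and finally the regularity bounds. The first stage is a counting argument. For any subset of $\mathbb T_t$ the terms divisible by $x_0$ are exactly those of growth $0$, and the terms of $x_0$-exponent $0$ are exactly the union of the growth classes of positive growth; so $L$ is the disjoint union of $\bigcup_{i\ge 1}L^{(i)}$ (the part of $x_0$-exponent $0$) and $\bigcup_{i\ge 1}L(i)$ (the part of positive $x_0$-exponent). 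By construction $|L^{(i)}|=|B^{(i)}|$ for $i\ge 1$ and $|L(i)|=|B(i)|$ for $i\ge 1$, so from Remark~\ref{rem:shape} applied to $B$ one gets $|L^{(0)}|=\sum_{i\ge 1}|L(i)|=\sum_{i\ge 1}|B(i)|=|B^{(0)}|$ and, symmetrically, $|L(0)|=|B(0)|$; hence $gv(L)=gv(B)$ and $hv(L)=hv(B)$. Granting that $L$ is Borel, it is growth-height-lexicographic by construction, and uniqueness is then immediate: a growth-height-lexicographic Borel set $B'$ with $gv(B')=gv(B)$ and $hv(B')=hv(B)$ must have each $B'^{(i)}$ ($i\ge 1$) equal to the lex-segment of size $|B^{(i)}|$ in $\mathbb T_t^{(i)}$ and each $B'(i)$ ($i\ge 1$) equal to the lex-segment of size $|B(i)|$ in $\mathbb T_t(i)$, so $B'=L$.

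The heart of the matter is that $L$ is a Borel set; I would verify the Borel moves in three families. First, a move among $x_1,\dots,x_n$ applied to a term of $L(i)$ stays in $L(i)$, because after cancelling $x_0^i$ the set $L(i)$ is a single-degree lex-segment in $K[x_1,\dots,x_n]$ and a single-degree lex-segment is a Borel set. Second, the move $x_0\mapsto x_j$ sends a term of $L(i)$ with $i\ge 2$ to a term of $x_0$-exponent $i-1$ which, after cancelling $x_0^{i-1}$, lies in the shadow (multiplication by $x_1,\dots,x_n$) of the lex-segment $L(i)/x_0^i$; since $B$ is Borel one has $\operatorname{shadow}(B(i)/x_0^i)\subseteq B(i-1)/x_0^{i-1}$, so by Macaulay's theorem $\operatorname{shadow}(L(i)/x_0^i)$ is again a lex-segment, of cardinality at most $|\operatorname{shadow}(B(i)/x_0^i)|\le |B(i-1)|$, hence contained in the lex-segment $L(i-1)/x_0^{i-1}$. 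What remains are the moves internal to the growth-lexicographic part $\bigcup_{i\ge 1}L^{(i)}$ together with the move $x_0\mapsto x_j$ carrying $L(1)$ into that part; these are precisely where Mall's structure theory of growth-lexicographic Borel sets is needed, and I would invoke \cite{Mall1,Mall2} (equivalently, the arguments in \cite{CLMR2015}), the delicate ingredient being that the shadow of a growth-lexicographic set again has a controllable growth-vector. This point, together with the regularity inequality below, is where I expect the real work to lie.

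With $L$ established as a Borel set, the equality of Hilbert functions follows at once from Proposition~\ref{prop:height-vector} applied to both $(B)^{\sat}$ and $(L)^{\sat}$: its part~(i) expresses the value at $j$ of the Hilbert function of the quotient for $j\le t$ through the height-vector, and its part~(iii) expresses it for $j=t+k$ through the growth-vector, and both vectors coincide for $B$ and $L$. For the bound $\reg(I)\le t$: the ideal $(L)$ is strongly stable and generated in degree $t$, so $\reg((L))=t$ by Remark~\ref{rem:zero-generic}, and passing to the saturation does not increase the regularity; the same reasoning gives $\reg(J)\le t$.

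The remaining inequality $\reg(J)\le\reg(I)$ is the main obstacle. Here I would use that $(B)^{\sat}$ and $(L)^{\sat}$ are saturated strongly stable ideals with the same Hilbert function, so that in every degree $s$ their graded pieces have equal dimension, and compare degree by degree the condition ``$\dim K_s=\dim(K_{s-1}\cdot S_1)$'', which for a strongly stable ideal $K$ holds exactly for all $s$ above $\reg(K)$. Because the growth-lexicographic layers of $L$ realize the smallest shadows permitted by Macaulay's theorem, this condition can fail for $I$ only in degrees in which it already fails for $J$, whence $\reg(J)\le\reg(I)$. Turning this shadow bookkeeping into a rigorous argument is the crux; alternatively one simply invokes \cite[Theorem~5.7]{CLMR2015}.
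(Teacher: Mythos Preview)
The paper does not prove this theorem at all: it is stated with the attribution \cite[Theorem~5.7]{CLMR2015} and no proof environment follows. So there is no ``paper's own proof'' to compare against; the paper simply imports the result.

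Your proposal therefore goes well beyond what the paper does. The parts you carry out in full---the counting that $gv(L)=gv(B)$ and $hv(L)=hv(B)$, the uniqueness given the Borel property, the equality of Hilbert functions via Proposition~\ref{prop:height-vector}, and the bound $\reg(I)\le t$ from Remark~\ref{rem:zero-generic}---are all correct and cleanly argued. But you yourself identify the two genuine obstacles: verifying the Borel moves that involve the growth-lexicographic part $\bigcup_{i\ge 1}L^{(i)}$ (and the transition from $L(1)$ into it), and the inequality $\reg(J)\le\reg(I)$. For both you end up invoking \cite{CLMR2015} or \cite{Mall1,Mall2}, which is exactly what the paper does wholesale. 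So your proposal is not a self-contained proof but a partial unpacking of the cited result, with the substantive steps still deferred to the same reference the paper cites. That is a fair and honest write-up, but it does not constitute an independent proof, and the paper makes no pretense of offering one either.
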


\begin{theorem}\label{th:strongly stable ideal}
Given a Hilbert function $u\in F(p(z),\varrho)$ and an integer $m$ such that $m_u\leq m \leq M_u$, let $L\subset \mathbb T_m$ be the growth-height-lexicographic Borel set with height-vector $hv(u,m)$ and growth-vector $gv(p(z),m)$. Then, the strongly stable ideal $I=(L)^{sat}$ defines a scheme with Hilbert function $u$ and Castelnuovo-Mumford regularity $m$.
\end{theorem}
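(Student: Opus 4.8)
The plan is to reduce the statement to the results already established, chiefly Theorem~\ref{th:main result}, Theorem~\ref{th:crucial}, and Corollary~\ref{cor:vectors}. First I would invoke Theorem~\ref{th:main result}: since $m_u \leq m \leq M_u = \reg(\lex(\Delta u))$, there exists a scheme $X'$ with Hilbert function $u$ and $\reg(X') = m$, and moreover $X'$ may be taken to be defined by a saturated strongly stable ideal $J \subset S$, by the ``equivalently'' clause in Theorem~\ref{th:construction} together with the zero-generic initial ideal remark (Remark~\ref{rem:zero-generic}); note $\reg(J) = m$ equals the maximal degree of a minimal generator of $J$. The point is then that I want to replace $J$ by the specific combinatorially-described ideal $I = (L)^{\sat}$ and check it has the same Hilbert function and the same regularity $m$.

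The second step is to pin down the degree $t$ at which to take the Borel set. Since $\reg(J) = m$ and $J$ is strongly stable and saturated, the truncation $J_{\geq m}$ in degree $m$ already generates $J$ up to saturation, so $J = (J_m)^{\sat}$; in particular $B := J_m \subset \mathbb{T}_m$ is a Borel set whose associated saturated ideal is $J$, with Hilbert function $u$ and regularity $m$ (here one uses $m \geq \varrho_u$, which holds because $m \geq m_u \geq \varrho_u + 1$ when $\dim X > 0$, or $m = \varrho_u + 1$ when $\dim X = 0$; in either case $J_m$ carries all the information). By Corollary~\ref{cor:vectors}, applied with $t = m$, the height-vector of $B = J_m$ equals $hv(u,m)$ and its growth-vector equals $gv(p(z),m)$, since these depend only on $u$, $p(z)$, and $m$. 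Thus the growth-height-lexicographic Borel set $L \subset \mathbb{T}_m$ attached to the data $hv(u,m)$, $gv(p(z),m)$ is exactly the $L$ produced from $B$ by Theorem~\ref{th:crucial}.

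The third step applies Theorem~\ref{th:crucial} directly: it tells us that $I = (L)^{\sat}$ and $J = (B)^{\sat}$ have the same Hilbert function $u$, and that $\reg(J) \leq \reg(I) \leq m$ with $t = m$. Since $\reg(J) = m$, we get $m \leq \reg(I) \leq m$, hence $\reg(I) = m$. As $L$ is a Borel set in a single degree, $I = (L)^{\sat}$ is strongly stable and saturated, so it defines a scheme $X = \Proj(S/I)$ with $H_X = u$ and $\reg(X) = \reg(I) = m$, which is the claim.

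I expect the main obstacle to be the bookkeeping around degrees: one must be sure that taking the Borel set in degree exactly $m$ (rather than in some higher degree guaranteed by Gotzmann-type bounds) is legitimate, i.e. that $J$ is generated in degrees $\leq m$ so that $J = (J_m)^{\sat}$ and the hypotheses of Corollary~\ref{cor:vectors} and Theorem~\ref{th:crucial} apply with $t = m$. This is exactly where $\reg(J) = m$ for a strongly stable $J$ (Remark~\ref{rem:zero-generic}, via \cite[Proposition 2.9]{BS}) does the work: the maximal generator degree is $m$, and strong stability guarantees the saturation of the degree-$m$ truncation recovers $J$. A minor additional check is that the scheme-theoretic version of Theorem~\ref{th:main result} indeed supplies a \emph{saturated strongly stable} defining ideal, but this is built into Theorem~\ref{th:construction} and the construction in the proof of Theorem~\ref{th:main result}, so no new argument is needed.
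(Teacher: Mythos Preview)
Your proposal is correct and follows essentially the same approach as the paper's own proof: invoke Theorem~\ref{th:main result} to produce a scheme with Hilbert function $u$ and regularity $m$, pass to a saturated strongly stable ideal $J$ (via the zero-generic initial ideal), take the Borel set $B=J_m$, identify its height- and growth-vectors via Corollary~\ref{cor:vectors}, and then apply Theorem~\ref{th:crucial} to the growth-height-lexicographic Borel set $L$. Your write-up is in fact more careful than the paper's, making explicit both the equality $J=(J_m)^{\sat}$ (via $\reg(J)=m$ being the maximal generator degree) and the squeeze $m=\reg(J)\leq\reg(I)\leq m$ that forces $\reg(I)=m$.
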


\begin{proof}
From Theorem \ref{th:main result}, a scheme $X$ with Hilbert function $u$ and $\reg(X)=m$ does exist. Let $J$ be the zero-generic initial ideal of the saturated defining ideal $I(X)$ of $X$, which is a strongly stable saturated ideal with the same Hilbert function and same regularity of $I(X)$. In particular, $J_m \subseteq \mathbb T_m$ is a Borel set. Then, we can take the height-vector $hv(J_m)=hv(u,m)$ and the growth-vector $gv(J_m)=gv(p(z),m)$, as described in the proof of Corollary \ref{cor:vectors}. 
By Theorem~\ref{th:crucial}, there is the unique growth-height-lexicographic Borel set $L=(\cup_{i=1,\dots,n} L^{(i)}) \bigcup (\cup_{i=1,\dots,m} L(i))$, which is determined by $hv(u,m)$ and $gv(p(z),m)$. Moreover, the ideal~$(L)^{sat}$ defines a scheme with Hilbert function $u$ and Castelnuovo-Mumford regularity $m$.
\end{proof}

\begin{example}
Consider the Hilbert function $u=(1,4z)$ which has $m_u=3$ and $M_u=4$. Applying Proposition \ref{prop:height-vector} and Corollary \ref{cor:vectors} to $u$ and $m=3$ we find $hv(u,3)=(6,2,0,0)$ and $gv(4z,3)=(2,2,3,1)$. The saturation of the corresponding growth-height-lexicographic Borel set is the ideal $J=(x_3^2,x_2x_3,x_2^3)$ in $S=K[x_0,x_1,x_2,x_3]$.
For $m=4$, we find $hv(u,4)=(11,6,2,0,0)$ and $gv(4z,4)=(8,6,4,1)$. In this case, the saturation of the corresponding growth-height-lexicographic Borel set is the ideal $J=(x_3^2,x_3x_2,$ $x_3x_1^2,x_2^4)\subset S$. Observe that $J$ is the monomial ideal generated in $S$ by the lex-segment ideal in $K[x_1,x_2,x_3]$ with Hilbert function $\Delta u$, according to Proposition \ref{prop: max reg}.
\end{example}

I highlight that here a growth-height-lexicographic Borel set is computed only starting from a Hilbert function $u\in F(p(z),\varrho)$ and an integer $m$ such that $m_u\leq m \leq M_u$. Indeed, Theorem \ref{th:main result} guarantees that the growth-height-lexicographic Borel set $L$ in $\mathbb T_m$ determined by $hv(u,m)$ and $gv(p(z),m)$ gives $\reg((L)^{sat})=m$. An implementation in CoCoA-4.7.5 (see \cite{CoCoA}) of an algorithm that constructs $(L)^{sat}$ is available at http://wpage.unina.it/cioffifr/GrowthHeight.

\section*{Acknowledgments}

I am very grateful to Le Tuan Hoa for the kind suggestion of the problem that I face here and for the  feedback he gave me on this work.



\providecommand{\bysame}{\leavevmode\hbox to3em{\hrulefill}\thinspace}
\providecommand{\MR}{\relax\ifhmode\unskip\space\fi MR }
\providecommand{\MRhref}[2]{%
  \href{http://www.ams.org/mathscinet-getitem?mr=#1}{#2}
}
\providecommand{\href}[2]{#2}

\end{document}